\def\bfeta{\boldsymbol{\eta}}
\def\int{\intop}
\newcommand{\wotwo}{\mathring{W}^{1,2}}
\newcommand{\woq}{\mathring{W}^{k,q}}
\def\sR{{\mathbb{R}}}
\def\div{\hbox{\rm div}\,}
\newtheorem{theo}{\bf Theorem}
\newtheorem{lem}{\bf Lemma}
\newtheorem{remark}{Remark}
\newtheorem{defi}{\bf Definition}
\newcommand{\supp}{\mathop{\rm supp}}
\renewcommand{\div}{\mathop{\rm div}}
\newcommand{\F}{\mathbb{F}}
\newcommand{\ab}{\mathbf{a}}
\newcommand{\nn}{\mathbf{n}}
\title{Nonhomogeneous Boundary Value Problem for the Steady Navier-Stokes Equations in 2D Symmetric Domains with Several Outlets to Infinity }
\author[a]{K. Kaulakyt\.{e}}
\author[b]{W. Xue}
\affil[a]{Faculty of Mathematics and Informatics, Vilnius University\\
Naugarduko str. 24, LT-03225 Vilnius, Lithuania}
\affil[b]{Institute of Mathematics, University of Zurich\\
Winterthurerstrasse 190, CH-8057 Zurich, Switzerland}  
\date{ }
\begin{document}
\maketitle
\begin{abstract}
In  this paper we study the nonhomongeneous boundary value problem for the stationary Navier-Stokes equations in two dimensional symmetric domains with finitely many outlets to infinity. The domains may have no self-symmetric outlet ($\mathbb{V}$-type domain), one self-symmetric outlet ($\mathbb{Y}$-type domain) or two self-symmetric outlets ($\mathbb{I}$-type domain). We construct a symmetric solenoidal extension of the boundary value satisfying the Leray-Hopf inequality. After having such an extension, the nonhomogeneous boundary value problem is reduced to homogeneous one and the existence of at least one weak solution follows. Notice that we do not impose any restrictions on the size of the fluxes over the inner and outer boundaries. Moreover, the Dirichlet integral of the solution can be either finite or infinite depending on the geometry of the domains.
\\{\bf Keywords:} stationary Navier--Stokes equations; nonhomogeneous boundary value problem; nonzero flux; 2-dimensional domains with outlets to infinity, symmetry.
\\{\bf AMS Subject Classification:} 35Q30; 35J65; 76D03; 76D05.
\end{abstract}

\setcounter{equation}{0}
\section{Introduction}

We study in this paper the steady nonhomogeneous boundary value problem to the Navier--Stokes equations in various types of two-dimensional symmetric unbounded domains with outlets to infinity. In \cite{KW} we considered this problem in domains consisting of only one outlet to infinity (either paraboloidal or channel-like). In this paper we generalize the method used in \cite{KW} in order to solve the problem in symmetric two-dimensional domains with several outlets to infinity (for instance, $\mathbb{V}$-type, $\mathbb{Y}$-type, $\mathbb{I}$-type domains).

Let us consider the steady Navier-Stokes equations with nonhomogeneous boundary conditions
\begin{equation}\label{prad0}\left\{\begin{array}{rcl}
-\nu \Delta{\bf u}+\big({\bf u}\cdot \nabla\big){\bf u} +\nabla p &
= & {\bf f}\qquad \hbox{\rm in }\;\;\Omega,\\
\div\,{\bf u} &  = & 0  \qquad \hbox{\rm in }\;\;\Omega,\\
 {\bf u} &  = & {\bf a} \qquad \hbox{\rm on }\;\;\partial\Omega
 \end{array}\right.\end{equation}
in a bounded domain with Lipschitz boundary $\partial \Omega$ consisting of $N$ disjoint components $\Gamma_j, j=1,...,N.$ The incompressibility of the fluid ($\div{\mathbf u} =  0$) implies a necessary compatibility condition for the solvability of problem \eqref{prad0}: 
\begin{equation}\label{nc}\begin{array}{l}
0=\int\limits_\Omega \div{\mathbf u}dx= \int\limits_{\partial\Omega}{\bf a}\cdot{\bf
n}\,dS=\sum\limits_{j=1}^N\int\limits_{\Gamma_j}{\bf a}\cdot{\bf n}\,dS=\sum\limits_{j=1}^N F_j,
\end{array}\end{equation}
where ${\bf n}$ is a unit vector of the outward  normal to $\partial\Omega$.
For a long time the solvability of problem \eqref{prad0} was proved only under the condition
\begin{equation}\label{3}\begin{array}{l}
{\F}_j=\int\limits_{\Gamma_j}{\bf a}\cdot{\bf n}\,dS=0,\qquad
j=1,2,\ldots,N,
\end{array}
\end{equation}
(e.g., \cite{Leray}, \cite{Lad}, \cite{Lad1} or \cite{VorJud})
or under the smallness assumptions on the fluxes $\F_j$ (e.g., \cite{BOPI}, \cite{Finn}, \cite{Fu}, \cite{Galdi1},
\cite{Kozono}),  or under certain symmetry assumptions on the domain
$\Omega$ and the boundary value ${\bf a}$ (e.g., {\cite{Amick},
\cite{Fu1}, \cite{FM}, \cite{Morimoto}, \cite{Pukhnachev},
\cite{Pukhnachev1}, \cite{Sazonov}, \cite{KPR1}).
In 1933 J. Leray formulated the fundamental question whether problem \eqref{prad0} can be solved only under the necessary compatibility condition  \eqref{nc}. This is so called Leray's problem which had been open for 80 years. Fortunately, recently Leray's problem was solved for a two dimensional multiply connected bounded domain (see \cite{KPR}, \cite{KPR0}, \cite{KPR5}). 
\\However, Leray's problem still remains open for some unbounded domains. For the non-symmetric domains with outlets to infinity problem \eqref{prad0}, \eqref{nc} was solved (see \cite{KAU}, \cite{KP}, \cite{Neustupa1}, \cite{Neustupa}) under the smallness assumption of the fluxes over the bounded components of the boundary (notice that there are no restrictions on the fluxes over the infinite parts of the boundary).
\\Next, there is series of papers by H. Fujita and H. Morimoto
(see \cite{Morimoto1}--\cite{Morimoto4}) where the authors solved problem \eqref{prad0} in symmetric two dimensional multiply connected domains $\Omega$ with
channel-like  outlets to infinity containing a finite number of
``holes" under the certain symmetry assumptions on the boundary value and external force. Furthermore,  in \cite{Morimoto1}--\cite{Morimoto4} the authors also assumed that the boundary value ${\bf a}$ is equal to zero on the
outer boundary and that in each outlet the flow tends to a Poiseuille flow which needs to be sufficiently small, i.e., although the fluxes over the boundary of each ``hole" may be arbitrarily large, but the sum of them has to be sufficiently small.
\\In \cite{KW} we study problem \eqref{prad0} in a two dimensional symmetric domain with one outlet to infinity (either paraboloidal or channel-like) under the same symmetry assumptions on the boundary value and the external force as in the papers of H. Fujita and H. Morimoto. Notice that we do not impose any restrictions on the fluxes over both: the inner and the outer boundaries. However, the technique we used in \cite{KW} works only for the domain with self-symmetric outlet and the outer boundary has to intersect the axis of the symmetry at least at one point. 

In this paper paper we study problem \eqref{prad0} in a class of symmetric multiply connected domains $\Omega\subset\mathbb{R}^2,$ having finitely many outlets to infinity, i.e., the domain may have self-symmetric and pairwise symmetric outlets. We assume that the boundary value and external force are symmetric functions, and the boundary value has a compact support. As in \cite{KW} we do not impose any restrictions on the sizes of the fluxes of ${\bf a}$ over the components of the inner and the outer boundaries. Under these conditions we prove 
the existence of at least one weak solution which can have  either finite or infinite Dirichlet integral, depending on the geometry of the domain. We give a constructive proof of the existence based on the special construction of a suitable extension  ${\bf A}$ of the boundary value ${\bf a}$ into the domain $\Omega.$ After we construct the suitable extension  ${\bf A},$ the existence of at least one weak solution can be proved on the same way as in the case of homogeneous boundary data (see \cite{LadSol3}, \cite{Sol3}). Moreover, this detailed proof is given in \cite{KW}. Therefore, we only give a general scheme of the existence proof and concentrate on the construction of a suitable extension of the boundary value.

\setcounter{equation}{0}
\setcounter{lem}{0}

\section{Main Notation and Auxiliary Results}

Vector valued functions are denoted by bold letters while function spaces for scalar and vector valued functions are not distinguished in notation.  

Let $\Omega$ be a domain in ${\mathbb{R}}^n$.
$C^{\infty}(\Omega)$ denotes the set of all infinitely
differentiable functions defined  on $\Omega$ and
$C_0^{\infty}(\Omega)$ is the subset of all functions from
$C^{\infty}(\Omega)$ with compact support in $\Omega$. For given
nonnegative  integers $k$ and $q > 1$, $L^q(\Omega)$ and $W^{k,
q}(\Omega)$ denote the usual Lebesgue and  Sobolev spaces; $W^{k-1/q,q}(\partial \Omega)$ is the trace space on
$\partial\Omega$ of functions from $W^{k, q}(\Omega)$;
$\woq(\Omega)$ is the closure of $C_{0}^{\infty}(\Omega)$ with respect to the
norm of $W^{k, q}(\Omega)$; for an unbounded domain $\Omega$ we write $u\in W^{k,q}_{
loc}(\overline{\Omega})$ if $u\in W^{k,q}(\Omega\cap B_R(0))$ for any $B_R(0)=\{x\in\sR^2:|x|\leq R\}.$

Let $D(\Omega)$ be the Hilbert space of vector valued functions formed as
the closure of $C_0^{\infty}(\Omega)$ with respect to the Dirichlet norm
$\|\textbf{u}\|_{ D(\Omega)}=\|\nabla\textbf{u}\|_{ L^2(\Omega)}$
induced by the scalar product
$$\begin{array}{l}
(\textbf{u},\textbf{v})=\int\limits_{\Omega}\nabla \textbf{u} :
\nabla\textbf{v} \ dx,
\end{array}
$$
where $\nabla\textbf{u} : \nabla\textbf{v}=\sum\limits^{n}_{j=1}
\nabla u_{j} \cdot \nabla v_{j}=\sum\limits_{j=1}^{n}
\sum\limits_{k=1}^{n} \dfrac{\partial u_{j}}{\partial x_{k}}
\dfrac{\partial v_{j}}{\partial x_{k}}.$ Denote  by
$J_0^{\infty}(\Omega)$  the set of all solenoidal ($\div {\bf u}=0$)
vector fields ${\bf u}$ from $C_0^{\infty}(\Omega)$. By $H(\Omega)$ we indicate the
space formed as the closure of $J_0^{\infty}(\Omega)$ with respect to the
Dirichlet norm.

Assume that $\Omega\subset\mathbb{R}^2$ is symmetric with respect to the $x_1$-axis, i.e.,
\begin{equation}\label{sd}
(x_1,x_2)\in\Omega \Leftrightarrow (x_1,-x_2)\in\Omega.
\end{equation}
The vector function ${\bf u}=(u_1, u_2)$ is called symmetric with respect to the $x_1$-axis if $u_1$ is an even function of $x_2$ and $u_2$ is an odd function of $x_2,$ i.e.,
\begin{equation}\label{ss}
u_1(x_1,x_2)=u_1(x_1,-x_2), \ \ \ \ u_2(x_1,x_2)=-u_2(x_1,-x_2).
\end{equation}
\\For any set of functions $V(\Omega)$ defined in the symmetric domain $\Omega$ satisfying \eqref{sd}, we denote by $V_S(\Omega)$ the subspace of symmetric functions from $V(\Omega).$

Below we use the well known results which are formulated in the following two lemmas.

\begin{lem}\label{hardy}(see \cite{Lad})
Let $\Pi\subset\sR^2$ be a bounded domain with Lipschitz boundary $\partial\Pi.$ Then for any ${\bf w}\in W^{1,2}(\Pi)$ with ${\bf w}\big|_{\mathcal{L}}=0,$ $\mathcal{L}\subseteq\partial\Pi,$ ${\rm meas (\mathcal{L})>0},$ the following inequality
\begin{equation}\label{hardy1}\begin{array}{l}
\int\limits_{\Pi}\dfrac{|{\bf w}|^2\,dx}{{\rm dist}^2(x,\mathcal{L})}\leq c\int\limits_{\Pi}|\nabla {\bf w}|^2\,dx
\end{array}
\end{equation}
holds.
\end{lem}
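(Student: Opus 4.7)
The statement is the classical Hardy inequality for functions vanishing on a Lipschitz portion $\mathcal{L}$ of the boundary. My plan is to localize near $\mathcal{L}$, flatten the boundary via bi-Lipschitz charts, and reduce to the one-dimensional Hardy inequality. By a standard density argument (truncation plus mollification of $\mathbf{w}$ to keep the trace zero on $\mathcal{L}$), it suffices to prove the estimate for $\mathbf{w}\in C^{1}(\overline{\Pi})$ with $\mathbf{w}|_{\mathcal{L}}=0$.

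Fix $\delta>0$ and split $\Pi$ into the interior region $\Pi_{\delta}:=\{x\in\Pi:\dist(x,\mathcal{L})>\delta\}$ and the collar $\Pi\setminus\Pi_{\delta}$. On $\Pi_{\delta}$ the weight is bounded by $\delta^{-2}$, so
$$
\int_{\Pi_{\delta}}\frac{|\mathbf{w}|^{2}}{\dist^{2}(x,\mathcal{L})}\,dx \;\le\; \delta^{-2}\int_{\Pi}|\mathbf{w}|^{2}\,dx \;\le\; C_{1}\int_{\Pi}|\nabla\mathbf{w}|^{2}\,dx,
$$
where the last bound is Poincaré, available because $\mathbf{w}$ vanishes on $\mathcal{L}\subset\partial\Pi$ with $\meas(\mathcal{L})>0$.

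For the collar, cover $\overline{\mathcal{L}}$ by finitely many balls $B_{j}$ so small that on $\Pi\cap B_{j}$ there is a bi-Lipschitz change of variables $x=\Phi_{j}(y)$ mapping a rectangle $\{0<y_{2}<T_{j}\}$ onto $\Pi\cap B_{j}$ and the segment $\{y_{2}=0\}$ onto a piece of $\mathcal{L}$; such charts exist because $\partial\Pi$ is Lipschitz. The Jacobian of $\Phi_{j}$ and its inverse are bounded, so both integrands transform up to constants, and after possibly shrinking $B_{j}$ one has $\dist(\Phi_{j}(y),\mathcal{L})\simeq y_{2}$. Fixing $y_{1}$ and applying the one-dimensional Hardy inequality
$$
\int_{0}^{T_{j}}\frac{|g(t)|^{2}}{t^{2}}\,dt \;\le\; 4\int_{0}^{T_{j}}|g'(t)|^{2}\,dt \qquad (g(0)=0)
$$
to $t\mapsto \mathbf{w}(\Phi_{j}(y_{1},t))$, then integrating in $y_{1}$ and summing over $j$, controls the collar contribution by $C_{2}\|\nabla\mathbf{w}\|_{L^{2}(\Pi)}^{2}$. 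A partition of unity subordinate to $\{B_{j}\}\cup\{\Pi_{\delta}\}$ glues the two pieces, at the cost of commutator terms that are absorbed into the bulk Poincaré bound.

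The main obstacle is the geometric comparison $\dist(\Phi_{j}(y),\mathcal{L})\simeq y_{2}$: for a point $x$ in the collar whose nearest point of $\mathcal{L}$ lies \emph{outside} the current chart, the transverse coordinate $y_{2}$ need not be comparable to the Euclidean distance to $\mathcal{L}$. This is handled by choosing the radii of the $B_{j}$ small relative to $\delta$ so that such "far" projections force $x\in\Pi_{\delta}$, where the Poincaré argument already suffices; the remaining collar points have their nearest projection inside the chart and the comparison becomes elementary from the Lipschitz bounds on $\Phi_{j}$ and $\Phi_{j}^{-1}$.
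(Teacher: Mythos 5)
The paper gives no proof of this lemma (it is quoted from Ladyzhenskaya's book), so your argument must stand on its own. Its architecture --- Poincar\'e on $\{\dist(x,\mathcal{L})>\delta\}$, bi-Lipschitz flattening plus the one-dimensional Hardy inequality in a collar around $\mathcal{L}$ --- is the standard one, and it is complete whenever $\mathcal{L}$ is a union of whole connected components of $\partial\Pi$ (e.g.\ $\mathcal{L}=\Gamma_1$ in the paper's first application).

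The gap is at the relative boundary of $\mathcal{L}$ in $\partial\Pi$. A chart $B_j$ covering an endpoint $P$ of $\mathcal{L}$ cannot map $\{y_2=0\}$ ``onto a piece of $\mathcal{L}$'': it maps it onto $\partial\Pi\cap B_j$, which necessarily contains points of $\partial\Pi\setminus\mathcal{L}$ where $\mathbf{w}$ need not vanish. On the vertical lines over such points the hypothesis $g(0)=0$ of the one-dimensional Hardy inequality fails, so your collar estimate does not cover them; yet these points can lie arbitrarily close to $P\in\mathcal{L}$, so they carry an arbitrarily large weight and are not pushed into $\Pi_\delta$ by any choice of radii. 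Your closing paragraph only treats the case where the nearest point of $\mathcal{L}$ lies \emph{outside} the chart, which is a different and easier issue. An extra idea is needed near each endpoint: for instance, flatten and pass to polar coordinates $(r,\theta)$ centered at $P$, note that $\dist(x,\mathcal{L})\ge c\,r\sin\theta$ for $\theta\le\pi/2$ and $\dist(x,\mathcal{L})\ge c\,r$ otherwise, and apply the one-dimensional Hardy inequality in the \emph{angular} variable (legitimate because $\mathbf{w}$ vanishes on the boundary ray $\theta=0$ corresponding to $\mathcal{L}$); this controls the contribution of a neighborhood of $P$ by $c\int|\nabla\mathbf{w}|^2\,dx$. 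Since the paper invokes the lemma with sets such as $\mathcal{L}=\Gamma_0^1\cap\Omega_0$ and $\mathcal{L}=\Lambda^{1,+}$, which do have endpoints, this case cannot be discarded. (A lesser point: the reduction to $C^1(\overline{\Pi})$ fields with exactly vanishing trace on $\mathcal{L}$ is itself not immediate; the safer route is to prove the bound for fields vanishing \emph{near} $\mathcal{L}$ and pass to the limit, or to argue directly on the $W^{1,2}$ function via absolute continuity on lines and Fatou's lemma.)
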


\begin{lem}\label{extension}(see \cite{Lad})
Let $\Pi\subset\sR^2$ be a bounded domain with Lipschitz boundary $\partial\Pi,$ $\mathcal{L}\subseteq\partial\Pi,$ ${\rm meas (\mathcal{L})>0}$ and the function ${\bf h}\in W^{1/2,2}(\partial\Pi)$ satisfies the conditions $\int\limits_{\mathcal{L}} {\bf h}\cdot {\bf n}\,dS=0,$ ${\rm supp}\,{\bf h}\subseteq \mathcal{L}.$ Then ${\bf h}$ can be extended inside $\Pi$ in the form
\begin{equation}\label{extension1}\begin{array}{l}
{\bf A}_*(x,\varepsilon)=\bigg(\dfrac{\partial(\chi(x,\varepsilon)\cdot {\bf E}(x))}{\partial x_2},- \dfrac{\partial(\chi(x,\varepsilon)\cdot {\bf E}(x))}{\partial x_1}\bigg),
\end{array}
\end{equation}
where ${\bf E}\in W^{2,2}(\Pi),$ $ \bigg(\dfrac{\partial {\bf E}(x)}{\partial x_2},- \dfrac{\partial{\bf E}(x)}{\partial x_1}\bigg)\bigg|_{\partial\Pi}={\bf h}$ and $\chi$ is a Hopf's type cut-off function, i.e., $\chi$ is smooth, $\chi(x,\varepsilon)=1$ on $\mathcal{L},$ ${\rm supp}\,\chi$ is contained in a small neighborhood of $\mathcal{L}$ and 
$$
|\nabla\chi(x,\varepsilon)|\leq \dfrac{\varepsilon\,c}{{\rm dist}(x,\mathcal{L})}.
$$
The constant $c$ is independent of $\varepsilon.$
\end{lem}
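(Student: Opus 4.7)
The plan is to build $\mathbf{A}_*$ as the planar ``curl'' of $\chi\mathbf{E}$, where $\mathbf{E}$ is a single-valued scalar stream function for some solenoidal $W^{1,2}$-extension of $\mathbf{h}$, and $\chi$ is a Hopf-type logarithmic cut-off concentrated near $\mathcal{L}$. Solenoidality of $\mathbf{A}_*$ is then automatic, so only the boundary trace identity and the gradient bound on $\chi$ need to be checked.

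First I would extend $\mathbf{h}$ by zero to $\partial\Pi\setminus\mathcal{L}$, which is admissible because $\supp\mathbf{h}\subseteq\mathcal{L}$ and preserves the zero-flux condition globally. By the standard Ladyzhenskaya solenoidal extension theorem for bounded two-dimensional Lipschitz domains, there exists $\mathbf{w}\in W^{1,2}(\Pi)$ with $\div\mathbf{w}=0$ and $\mathbf{w}|_{\partial\Pi}=\mathbf{h}$. Since $\mathbf{w}$ is solenoidal in the plane and its flux vanishes through every connected component of $\partial\Pi$ (trivially on the ones disjoint from $\mathcal{L}$, and by the hypothesis $\int_{\mathcal{L}}\mathbf{h}\cdot\mathbf{n}\,dS=0$ on the remainder), the $1$-form $w_1\,dx_2-w_2\,dx_1$ is exact; this yields a single-valued $\mathbf{E}\in W^{2,2}(\Pi)$ with $\partial\mathbf{E}/\partial x_2=w_1$ and $-\partial\mathbf{E}/\partial x_1=w_2$, and its trace on $\partial\Pi$ delivers the boundary identity for $\mathbf{E}$ stated in the lemma.

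Next I would construct $\chi$ as a classical logarithmic cut-off. Set $d(x)=\dist(x,\mathcal{L})$, pick a smooth non-increasing function $\xi\colon\mathbb{R}\to[0,1]$ with $\xi\equiv 1$ on $(-\infty,0]$ and $\xi\equiv 0$ on $[1,\infty)$, fix a small $\delta>0$, and define
\begin{equation*}
\chi(x,\varepsilon)=\xi\!\left(\varepsilon\,\log\frac{\delta}{d(x)}\right)
\end{equation*}
for $0<d(x)<\delta$, extended by $1$ on $\mathcal{L}$ and by $0$ elsewhere. Differentiating through $\xi'$ and using $|\nabla d|\le 1$ gives the required bound $|\nabla\chi(x,\varepsilon)|\le \varepsilon c/d(x)$ with $c$ independent of $\varepsilon$. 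Finally, putting $\mathbf{A}_*(x,\varepsilon)=(\partial(\chi\mathbf{E})/\partial x_2,\,-\partial(\chi\mathbf{E})/\partial x_1)$ produces a divergence-free element of $W^{1,2}(\Pi)$ whose trace equals $\mathbf{h}$ on $\mathcal{L}$ (since $\chi=1$ there) and vanishes on $\partial\Pi\setminus\mathcal{L}$ (since $\chi=0$ there), confirming \eqref{extension1}.

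The main technical obstacle is establishing the stream function as a \emph{single-valued} $W^{2,2}(\Pi)$-function when $\Pi$ is multiply connected: this hinges precisely on the vanishing of the flux of $\mathbf{w}$ through every boundary component, which is exactly what the combined hypotheses $\supp\mathbf{h}\subseteq\mathcal{L}$ and $\int_{\mathcal{L}}\mathbf{h}\cdot\mathbf{n}\,dS=0$ guarantee. Once $\mathbf{E}$ and $\chi$ are in place, the remaining verification reduces to a direct differentiation.
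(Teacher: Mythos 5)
The paper offers no proof of this lemma at all --- it is quoted from Ladyzhenskaya's book --- and the formula \eqref{extension1} in the statement already encodes the classical Hopf construction, which is exactly the route you take: stream function plus logarithmic cut-off. So your approach is the intended one, and the reduction of everything to ``exactness of $w_1\,dx_2-w_2\,dx_1$ via vanishing fluxes, then differentiate $\chi\mathbf{E}$'' is the right skeleton. Two points, however, need repair. First, your cut-off is oriented backwards: with $\xi$ non-increasing, $\xi\equiv 1$ on $(-\infty,0]$, $\xi\equiv 0$ on $[1,\infty)$ and argument $\varepsilon\log(\delta/d(x))$, you get $\chi\equiv 0$ in the layer $d(x)\le\delta e^{-1/\varepsilon}$ and $\chi\equiv 1$ for $d(x)\ge\delta$ --- the opposite of what is required, and inconsistent with your ``extended by $1$ on $\mathcal{L}$'' (which would make $\chi$ discontinuous). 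As written the trace of $\mathbf{A}_*$ on $\mathcal{L}$ would be $0$, not $\mathbf{h}$. The fix is trivial (take $\xi$ non-decreasing, or replace $\xi$ by $1-\xi$), but the construction fails as literally stated; one should also use a regularized distance as in \eqref{rd}, since $\dist(\cdot,\mathcal{L})$ is only Lipschitz and the lemma asserts $\chi$ is smooth.

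Second, you verify the trace only where $\chi=1$ or $\chi=0$, but on the portion of $\partial\Pi\setminus\mathcal{L}$ lying in the transition zone $\delta e^{-1/\varepsilon}<d(x)<\delta$ (i.e., near the relative boundary of $\mathcal{L}$ inside its component of $\partial\Pi$) the trace of $\mathbf{A}_*$ equals $\chi\mathbf{h}+\mathbf{E}\,(\partial_2\chi,-\partial_1\chi)=\mathbf{E}\,(\partial_2\chi,-\partial_1\chi)$, which need not vanish unless $\mathbf{E}$ does. This is where the hypotheses enter once more: since $\mathbf{w}|_{\partial\Pi\setminus\mathcal{L}}=0$, the tangential derivative of $\mathbf{E}$ vanishes there, so $\mathbf{E}$ is constant on the complementary arc and can be normalized to zero --- provided $\mathcal{L}$ is a single arc of (or an entire) connected component of $\partial\Pi$, which is the only situation in which the lemma is applied in this paper. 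The same caveat affects your single-valuedness argument: $\int_{\mathcal{L}}\mathbf{h}\cdot\mathbf{n}\,dS=0$ forces the flux through \emph{each} component of $\partial\Pi$ to vanish only when $\mathcal{L}$ meets one component; for a general measurable $\mathcal{L}$ spread over several components your claim ``flux vanishes through every component'' does not follow. Neither issue is fatal for the lemma as used, but both should be stated rather than passed over.
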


Let ${\mathcal M}$ be a closed set in $\sR^2.$ $\Delta_{{\mathcal
M}}(x)$  denotes  the regularized distance from the point  $x$ to
the set  ${\mathcal M}$. Notice that  $\Delta_{{\mathcal M}}(x)$ is an
infinitely differentiable function in $\sR^2\setminus {\mathcal M}$ and
the following inequalities
\begin{equation}\label{rd}\begin{array}{l}
a_1d_{\mathcal M} (x)\leq \Delta_{{\mathcal M}} (x)\leq a_2d_{\mathcal M} (x), \\
\\
|D^{\alpha}\Delta_{{\mathcal M}} (x)|\leq a_3 d_{\mathcal M}^{1-|\alpha|}
(x)\end{array}\end{equation}
hold, where $d_{\mathcal M}=dist(x,{\mathcal M})$ is the distance from
$x$ to ${\mathcal M}$, the positive constants $a_1, a_2$ and $a_3$ are independent of $\mathcal M$ (see \cite{Stein}).

\setcounter{equation}{0}
\setcounter{lem}{0}

\section{Problem Formulation and Solvability}

\subsection{Formulation of the Problem}

Let $\Omega\subset\mathbb{R}^2$ be an unbounded symmetric domain 
$$
\Omega = \Omega_0\cup D_1\cup...\cup D_N, \ \ \ D_j\cap D_k=\emptyset, \ \ j\neq k,
$$
where $\Omega_0 = \Omega \cap B_{R_0} (0)\subset B_{R_0} (0)$ is the bounded part of the domain $\Omega$ and the unbounded components $D_j, \ j=1,...,N,$ are called ''outlets to infinity." These outlets $D_j$ in some cartesian coordinate systems $z^{(j)}$ have the form
$$\begin{array}{l}
D_j=\{z^{(j)}\in\mathbb{R}^2: |z^{(j)}_2|<g_j(z^{(j)}_1), \ z^{(j)}_1>R_0\},
\end{array}$$  
where $z^{(j)}$ means the local coordinate system in the outlet $D_j$   and
$g_j(t)\geq R_0>0$ are functions satisfying the Lipschitz condition
$$|g_j(t_1)-g_j(t_2)|\leq L_j|t_1-t_2|, \ \ \ t_1,t_2\geq R_0. $$
Depending on the function $g_j$ each outlet $D_j$ may expand at infinity but not too much in order not to intersect each other. Notice that if the cross section of the outlet is constant, then we have channel-like outlet. Therefore, channel-like outlets are included as well.
\\Since we consider symmetric domains we may have pairs of outlets which are symmetric to each other (briefly we call them symmetric outlets) and self-symmetric outlets\footnote{If an outlet does not find another outlet symmetric to it, then this outlet itself is symmetric with respect to the $x_1$-axis and it is called self-symmetric outlet (see \cite{Morimoto4}).}.
\begin{defi}\label{domain} We call a symmetric domain $\Omega\subset\mathbb{R}^2$ an admissible domain if $\Omega$ satisfies the following assumptions
\\(i) the boundary $\partial\Omega$ is Lipschitz,
\\(ii) the bounded domain $\Omega_0$ has the form
\begin{equation*}\begin{array}{l}
\Omega_0=G_0\setminus \cup_{i=1}^I G_i,
\end{array}\end{equation*}
where $G_0$ and $G_i,\ \ i=1,...,I,$ are
bounded simply connected domains such that $\overline{G}_i \subset
G_0$. Each $\Gamma_i=\partial G_i, \ i=1,...,I,$ intersects the $x_1$- axis;
\\(iii) the boundary $\partial\Omega$ is composed of the inner boundarY $\cup_{i=1}^I \Gamma_i=\Gamma$ and the outer boundary $\partial\Omega\setminus\Gamma=\Gamma_0.$ The outer boundary $\Gamma_0$ consists of $N$ not connected unbounded components
$\Gamma_0^m, \ \ m=1,...,N,$ i.e., $\cup_{m=1}^N \Gamma_0^m=\Gamma_0,$
\\(iv) the outlets of $\Omega$ are of the following types:
$\mathbb{V}$ (no self-symmetric outlet) or 
$\mathbb{Y}$ (only one self-symmetric outlet)
or
$\mathbb{I}$ (two self-symmetric outlets).
\end{defi}

\begin{remark}
{\rm Notice that Definition\,\ref{domain} covers  not only $\mathbb{V},$ $\mathbb{Y}$ and $\mathbb{I}$ type outlets, but also all the possible combinations of them, i.e., domain $\Omega$ in general may have finite number of outlets to infinity.
}
\end{remark}
Below we use the following notation:
\begin{equation*}\begin{array}{l}
\Omega_{l}=\Omega_0\cup D_1^{(l)}\cup...\cup D_N^{(l)} \ \ {\rm with} \ \ D_j^{(l)}=\{z^{(j)}\in D_j: z^{(j)}_1<R_{j,\,l}\},\\
\\
R_{j,\,l+1}=R_{j,\,l}+\dfrac{g_j(R_{j,\,l})}{2L_j}, \ \ j=1,...,N.
\end{array}
\end{equation*}
We consider the following problem
\begin{equation}\label{prad00}\left\{\begin{array}{rcl}
-\nu \Delta{\bf u}+\big({\bf u}\cdot \nabla\big){\bf u} +\nabla p &
= & {\bf f}\qquad \hbox{\rm in }\;\;\Omega,\\
\div\,{\bf u} &  = & 0  \qquad \hbox{\rm in }\;\;\Omega,\\
 {\bf u} &  = & {\bf a} \qquad \hbox{\rm on }\;\;\partial\Omega\\
\int\limits_{\sigma_j(R)}{\bf u} \cdot {\bf n}\,dS & = & \mathbb{F}_j, \ \ \ j=1,...,N, \ R\geq R_0,
\end{array}\right.\end{equation}
where $\mathbb{F}_j, \ j=1,...,N,$ are the prescribed fluxes over the cross sections $\sigma_j(R)$ of the outlets $D_j,$ ${\bf n}$ is the unit vector of the normal to $\sigma_j.$
\\We  suppose that the boundary value ${\bf a}\in
W^{1/2,2}(\partial\Omega)$ has a compact support and we denote $\Lambda^m = \Gamma_0^{m}\cap\partial\Omega_0,$ $m=1,...,N.$
Let 
$$\begin{array}{l}
\int\limits_{\Gamma_i} {\bf a }\cdot {\bf n} \,dS=\mathbb{F}_i^{(inn)}, \ i=1,...,I \ \ \ \int\limits_{\Lambda^m} {\bf a }\cdot {\bf n} \, dS=\mathbb{F}_m^{(out)}, \ \ m=1,...,N\end{array}$$ 
be the fluxes of the boundary value ${\bf a}$ over the inner and outer boundaries, respectively. 
\\Notice that for $\Lambda^j$ and $\Lambda^k,$ $j\neq k,$ which are symmetric to each other, we have
$$\begin{array}{l}
\mathbb{F}_j^{(out)}=\int\limits_{\Lambda^j}{\bf a}\cdot {\bf n}\,dS=\int\limits_{\Lambda^k}{\bf a}\cdot {\bf n}\,dS=\mathbb{F}_k^{(out)}.
\end{array}$$
The necessary compatibility condition \eqref{nc} could be written as follows:
\begin{equation}\label{nc1}\begin{array}{l}
\sum\limits_{i=1}^I \mathbb{F}_i^{(inn)}+\sum\limits_{m=1}^N \mathbb{F}_m^{(out)}+\sum\limits_{j=1}^N \mathbb{F}_j=0.
\end{array}\end{equation}
\subsection{Solvability of Problem \eqref{prad00}}
\begin{defi}
Under a symmetric weak solution of problem \eqref{prad00} we understand a solenoidal vector field ${\bf u}\in W^{1,2}_{loc,S}(\overline{\Omega})$ satisfying the boundary condition ${\bf u}\big|_{\partial\Omega}={\bf a},$ the flux conditions 
$$\begin{array}{l}
\int\limits_{\sigma_j(R)}{\bf u}\cdot {\bf n}\,dS=\mathbb{F}_j, \ \ \ j=1,...,N, \ \ R>R_0
\end{array}$$
and the integral identity
\begin{equation}\label{ws33}\begin{array}{l}
\nu\int\limits_{\Omega} \nabla {\bf u} : \nabla\bfeta \, dx
-\int\limits_{\Omega} ({\bf u}\cdot\nabla)\bfeta\cdot{\bf u} \,
dx
=\int\limits_{\Omega}{\bf f}\cdot \bfeta\,dx, \ \ \ \
\ \forall \bfeta\in J^\infty_{0,S}(\Omega).
\end{array}\end{equation}
\end{defi}
The fundamental tool to solve the nonhomogeneous boundary value problem is to reduce it to the problem with homogeneous boundary conditions. Let ${\bf A}$ be a symmetric solenoidal extension of the boundary value ${\bf a}$ into $\Omega$ such that
\begin{equation}\label{ge}\begin{array}{l}
{\bf A}\big|_{\partial\Omega}={\bf a}, \ \ \ \int\limits_{\sigma_j(R)}{\bf A}\cdot {\bf n}\,dS=\mathbb{F}_j, \ \ j=1,...,N.
\end{array}\end{equation}
We put ${\bf u}={\bf v}+{\bf A}$ into identity \eqref{ws33} and look for the new unknown velocity field ${\bf v}\in W^{1,2}_{loc,S}(\overline{\Omega})$ satisfying the integral identity 
\begin{equation}\label{ws3}\begin{array}{l}
\nu\int\limits_{\Omega} \nabla {\bf v} : \nabla\bfeta \, dx -
\int\limits_{\Omega} (({\bf A}+{\bf v})\cdot\nabla)\bfeta\cdot{\bf v} \, dx
-\int\limits_{\Omega} ({\bf v}\cdot\nabla)\bfeta\cdot{\bf A} \,
dx\\
\\
=\int\limits_{\Omega} ({\bf A}\cdot\nabla)\bfeta\cdot{\bf A} \, dx -
\nu\int\limits_{\Omega} \nabla {\bf A} : \nabla\bfeta \, dx+\int\limits_{\Omega}{\bf f}\cdot \bfeta\,dx, \ \ \ \
\ \forall \bfeta\in J^\infty_{0,S}(\Omega).
\end{array}\end{equation}
and zero boundary and flux conditions:
$$\begin{array}{l}
\div {\bf  v}=0, \quad {\bf v}\big|_{\partial\Omega}=0,\quad
\int\limits_{\sigma_j(R)}{\bf v}\cdot{\bf n}\,dS = 0, \ \ j=1,...,N, \ \ R>R_0>0.
\end{array}
$$
\begin{remark}
{\rm Notice that the integral identity \eqref{ws3} remains valid for the non-symmetric functions $\bfeta\in J^\infty_0(\Omega)$ as well. It is well known that each function $\bfeta\in J^\infty_0(\Omega)$ can be decomposed to a sum $\bfeta=\bfeta_S+\bfeta_{AS},$ where $\bfeta_S$ is symmetric and $\bfeta_{AS}$ is antisymmetric, and it can be easily verified that all integrals in \eqref{ws3} vanish for $\bfeta=\bfeta_{AS}.$}
\end{remark}
The existence  of ${\bf v}$ satisfying \eqref{ws3} could be proved following the general scheme proposed by O. A. Ladyzhenskaya and V.A. Solonnikov (see \cite{LadSol3}, \ \cite{Sol3}). We give the sketch of this proof. Let us assume (as in \cite{Sol3}) that there is a sequence of bounded domains $\Omega_l$ such that $\Omega_l\subset\Omega_{l+1}$ and $\Omega_l$ exhausts $\Omega$ as $l\rightarrow +\infty.$ We shall find a solution to \eqref{ws3} as a limit solutions ${\bf v}^{(l)}\in H_S(\Omega_l),$ satisfying the following integral identity
\begin{equation}\label{ws3e}\begin{array}{l}
\nu\int\limits_{\Omega_l} \nabla {\bf v}^{(l)} : \nabla\bfeta \, dx -
\int\limits_{\Omega_l} (({\bf A}+{\bf v}^{(l)})\cdot\nabla)\bfeta\cdot{\bf v}^{(l)} \, dx
-\int\limits_{\Omega_l} ({\bf v}^{(l)}\cdot\nabla)\bfeta\cdot{\bf A} \,
dx\\
\\
=\int\limits_{\Omega_l} ({\bf A}\cdot\nabla)\bfeta\cdot{\bf A} \, dx -
\nu\int\limits_{\Omega_l} \nabla {\bf A} : \nabla\bfeta \, dx+\int\limits_{\Omega}{\bf f}\cdot \bfeta\,dx, \ \ \ \
\ \forall \bfeta\in J^\infty_{0,S}(\Omega_l).
\end{array}\end{equation}
This integral identity is equivalent to the operator equation:
\begin{equation}\label{oe}\begin{array}{l}
{\bf v}^{(l)}=\dfrac{1}{\nu}\mathcal{A}\,{\bf v}^{(l)}
\end{array}\end{equation}
with the compact operator $\mathcal{A}$ in the space $H_S(\Omega_l).$
The solvability of the operator equation \eqref{oe} can be obtained by applying the Leray-Schauder theorem, i.e., we need to show that all possible solutions of the operator equation 
\begin{equation}\label{oe1}\begin{array}{l}
{\bf v}^{(l,\lambda)}=\dfrac{\lambda}{\nu}\mathcal{A}\,{\bf v}^{(l, \lambda)}, \ \ \ \lambda\in [0,1],
\end{array}\end{equation}
are uniformly (with respect to $\lambda$) bounded. To prove this estimate we construct an extension ${\bf A}$ satisfying the following so called Leray-Hopf's inequality:
\begin{equation}\label{lh}\begin{array}{l}
\big|\int\limits_{\Omega_{l}}({\bf w} \cdot \nabla){\bf w} \cdot {\bf A}\,dx\big| \leq c\,\varepsilon \int\limits_{\Omega_{l}}|\nabla {\bf w}|^2\,dx
\end{array}\end{equation}
for every symmetric solenoidal function ${\bf w} \in W^{1,2}_{loc}(\overline{\Omega})$ with ${\bf w}|_{\partial\Omega}=0.$ 
\\If \eqref{lh} with an appropriate $\varepsilon$ is true, then we obtain the following estimate:
\begin{equation}\label{e}\begin{array}{l}
\int\limits_{\Omega_{l}}|\nabla\textbf{u}|^2\,dx
\leq c({\bf a}, \|{\bf f}\|_*)\, \bigg(1+\sum\limits_{j=1}^N\int\limits_{R_0}^{R_{j,l}}\dfrac{dx_1}{g_j^3(x_1)}\bigg),\end{array}\end{equation}
where constant $c({\bf a}, \|{\bf f}\|_*)$ is defined below in the Theorem\,3.1.
\\If $\int\limits_{R_0}^{+\infty}\dfrac{dx_1}{g_j^3(x_1)}<+\infty$ for every $j=1,...,N,$ then the right hand side of \eqref{e} is bounded by a constant uniformly independent of $l$ and we get for a limit vector function the integral identity \eqref{ws3}.
\\If there exists at least one number $j$ such that  $\int\limits_{R_0}^{+\infty}\dfrac{dx_1}{g_j^3(x_1)}=+\infty,$ then we cannot pass to a limit because the right hand side of \eqref{e} becomes infinite. Therefore, we need to control the Dirichlet integral of ${\bf v}^{(l)}$ over subdomains $\Omega_k\subset\Omega_l, \ \ k\leq l.$ To do this we need to apply the special techniques (so called estimates of Saint-Venant type) developed in \cite{LadSol3}, \ \cite{Sol3}.
Then we obtain the following estimate:
\begin{equation*}\begin{array}{l}
\int\limits_{\Omega_{k}}|\nabla\textbf{u}|^2\,dx
\leq c({\bf a}, \|{\bf f}\|_*)\, \bigg(1+\sum\limits_{j=1}^N\int\limits_{R_0}^{R_{j,k}}\dfrac{dx_1}{g_j^3(x_1)}\bigg), \ \ k\leq l.\end{array}\end{equation*}
This estimate ensures the existence of a subsequence $\{ {\bf v}^{(l_m)} \}$ which  converges weakly in $\wotwo(\Omega_k)$ and strongly in $L_4(\Omega_k), \ \ \forall k>0,$ and we can pass to a limit as $l_m\rightarrow +\infty.$ As a result we get for a limit vector function the integral identity \eqref{ws3}.

Therefore, a significant part in proving the existence of ${\bf v}$ concerns the construction of the extension ${\bf A}$ having properties \eqref{ge} and satisfying so called Leray-Hopf's inequality \eqref{lh}. Detailed existence proof for the domain having one self-symmetric outlet can be found in \cite{KW}.
\begin{theo}
Suppose that $\Omega\subset\sR^2$ is an admissible domain given in Definition\,\ref{domain}.  Assume that the boundary value $\textbf{a}$ is a symmetric field in $W^{1/2,2}(\partial\Omega)$ having a compact support, the external force ${\bf f}$ is a symmetric vector field such that for every $k$ the integral $\int\limits_{\Omega_k}{\bf f}\cdot\bfeta\,dx$ defines a  bounded functional on $H(\Omega_k).$ 
Then problem \eqref{prad00} admits at least one weak solution ${\bf u}={\bf A}+{\bf v}.$ Furthermore, if $
\int\limits_{R_0}^{+\infty}\dfrac{dx_1}{g^3_j (x_1)}<+\infty, \ \ j=1,...,N,
$ then the weak solution ${\bf u}$ satisfies the estimate
\begin{equation}\label{te1}\begin{array}{l}
\int\limits_{\Omega}|\nabla\textbf{u}|^2\,dx
\leq c({\bf a}, \|{\bf f}\|_*)\, \bigg(1+\sum\limits_{j=1}^N\int\limits_{R_0}^{+\infty}\dfrac{dx_1}{g^3_j(x_1)}\bigg)\end{array}\end{equation}
while if there is a number $j\in\{1,...,N \}$ such that $
\int\limits_{R_0}^{+\infty}\dfrac{dx_1}{g^3_j(x_1)}=+\infty,
$ then ${\bf u}$ satisfies
\begin{equation}\label{te}\begin{array}{l}
\int\limits_{\Omega_{k}}|\nabla\textbf{u}|^2\,dx
\leq c({\bf a}, \|{\bf f}\|_*)\, \bigg(1+\sum\limits_{j=1}^N\int\limits_{R_0}^{R_{j,k}}\dfrac{dx_1}{g_j^3(x_1)}\bigg),\end{array}\end{equation}
where 
\begin{equation*}\begin{array}{l}
\|{\bf f}\|_*=\sup\limits_{k\geq 1}\bigg(\big(1+\sum\limits_{j=1}^N\int\limits_{R_0}^{R_{j,k}}\dfrac{dx_1}{g_j^3(x_1)}\big)^{-1/2}\cdot \|{\bf f}\|_{H^{*}(\Omega_k)}\bigg), \ \
\|{\bf f}\|_{H^{*}(\Omega_k)}=\sup\limits_{\bfeta\in J^{\infty}_0(\Omega_k)}\dfrac{\big|\int\limits_{\Omega_k}{\bf f}\cdot\bfeta\,dx\big|}{\|\bfeta\|_{D(\Omega_k)}},
\end{array}\end{equation*}
$c({\bf a}, \|{\bf f}\|_*)=c\bigg(\|{\bf a}\|^2_{W^{1/2,\,2}(\partial\Omega)}+\| {\bf a}\|^4_{W^{1/2,\,2}(\partial\Omega)}\!+\!\|{\bf f}\|^2_*\bigg)$ and $c$ is independent of $k.$
\end{theo}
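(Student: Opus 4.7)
The overall plan is to follow the scheme sketched in Section~3.2: construct a symmetric solenoidal extension ${\bf A}$ of the boundary value ${\bf a}$ that realizes all the prescribed fluxes $\mathbb{F}_j$, $\mathbb{F}_i^{(inn)}$, $\mathbb{F}_m^{(out)}$ and satisfies the Leray-Hopf inequality \eqref{lh}. Given such an ${\bf A}$, the problem reduces to finding ${\bf v}$ satisfying \eqref{ws3}. The plan is to solve \eqref{ws3e} on each bounded $\Omega_l$ by rewriting it as the operator equation \eqref{oe} with a compact operator on $H_S(\Omega_l)$ and invoking the Leray-Schauder theorem; the uniform-in-$\lambda$ bound for \eqref{oe1} comes from testing with ${\bf v}^{(l,\lambda)}$ and absorbing the dangerous nonlinear term $\int ({\bf v}^{(l,\lambda)}\cdot\nabla){\bf v}^{(l,\lambda)}\cdot{\bf A}\,dx$ via \eqref{lh} with $c\varepsilon\leq\nu/2$. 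This yields \eqref{e}. In the convergent case ($\int_{R_0}^{+\infty}g_j^{-3}\,dx_1<+\infty$ for every $j$) I would then pass directly to the limit $l\to+\infty$ and obtain \eqref{te1}. In the divergent case I would invoke the Saint-Venant type induction of Ladyzhenskaya--Solonnikov \cite{LadSol3,Sol3}, carried out in detail for the single-outlet analog in \cite{KW}, to control $\int_{\Omega_k}|\nabla{\bf v}^{(l)}|^2\,dx$ uniformly in $l\geq k$; extracting a weakly-in-$\wotwo(\Omega_k)$ and strongly-in-$L^4(\Omega_k)$ convergent subsequence gives \eqref{te}.

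The substantive step is the construction of ${\bf A}$. I would build it as a sum ${\bf A}=\sum_i{\bf A}_i^{(inn)}+\sum_m{\bf A}_m^{(out)}+\sum_j{\bf A}_j^{(fl)}$, where each summand is the $\pi/2$-rotation of the gradient of a stream function supported in the $\varepsilon$-neighborhood of a piece of $\partial\Omega$. For each inner component $\Gamma_i$, which meets the $x_1$-axis by Definition~\ref{domain}(ii), Lemma~\ref{extension} in a thin symmetric tubular neighborhood of $\Gamma_i$ with a stream function taken odd in $x_2$ produces a symmetric ${\bf A}_i^{(inn)}$ concentrated in $\{x:\dist(x,\Gamma_i)\leq\varepsilon\}$. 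The outer arcs $\Lambda^m$ are handled pairwise (build on one $\Lambda^j$ and reflect to its partner $\Lambda^k$) for symmetric pairs and with an odd-in-$x_2$ stream function for a self-symmetric arc. In each outlet $D_j$ I would add a boundary-layer flux-carrier ${\bf A}_j^{(fl)}$: choose a smooth function equal to $\mathbb{F}_j/2$ near one wall of $D_j$ and $-\mathbb{F}_j/2$ near the other (with the $x_2$-parity dictated by whether $D_j$ is self-symmetric or belongs to a symmetric pair), multiply by a Hopf cut-off of the outlet walls, and take the rotated gradient. The resulting field is symmetric, supported within distance $\varepsilon$ of $\partial D_j$, and carries flux $\mathbb{F}_j$ across every cross-section $\sigma_j(R)$. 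The compatibility \eqref{nc1} together with the symmetric identity $\mathbb{F}_j^{(out)}=\mathbb{F}_k^{(out)}$ for mirror pairs lets the local stream functions be reconciled into a single-valued global one, so ${\bf A}$ is globally divergence-free.

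By construction ${\bf A}$ is symmetric, solenoidal, compactly supported in an $\varepsilon$-neighborhood of $\partial\Omega$, and satisfies the pointwise bound $|{\bf A}(x)|\leq C\varepsilon/\dist(x,\partial\Omega)$ with $C$ independent of $\varepsilon$. For any symmetric solenoidal ${\bf w}\in W^{1,2}_{loc}(\overline\Omega)$ vanishing on $\partial\Omega$, Cauchy-Schwarz followed by Hardy's inequality (Lemma~\ref{hardy}) yields
$$
\Bigl|\int_{\Omega_l}({\bf w}\cdot\nabla){\bf w}\cdot{\bf A}\,dx\Bigr|\leq C\varepsilon\Bigl(\int_{\Omega_l}\frac{|{\bf w}|^2}{\dist(x,\partial\Omega)^2}\,dx\Bigr)^{1/2}\Bigl(\int_{\Omega_l}|\nabla{\bf w}|^2\,dx\Bigr)^{1/2}\leq C\varepsilon\int_{\Omega_l}|\nabla{\bf w}|^2\,dx,
$$
which is exactly \eqref{lh}; $\varepsilon$ can be taken as small as required.

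The main obstacle is the simultaneous, symmetric, boundary-concentrated realization of all the fluxes by a single globally solenoidal field. For one self-symmetric outlet \cite{KW} already does this; the new difficulty is that in a $\mathbb{V}$-, $\mathbb{Y}$- or $\mathbb{I}$-type geometry the three kinds of outlets coexist and the outlet flux-carriers must glue coherently in $\Omega_0$ with the inner and outer arc extensions. This forces a careful choice of $x_2$-parity of every piece of the stream function (odd on self-symmetric components, obtained by reflection on symmetric pairs), and it is here that Definition~\ref{domain} is used in full: each inner $\Gamma_i$ meets the axis, the outer boundary decomposes into the $\Lambda^m$'s, and the outlets fall into the three listed symmetry types. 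Once this geometric bookkeeping is carried out, everything else in the proof is a direct adaptation of \cite{KW}.
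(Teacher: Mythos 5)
Your outer framework (reduction to the homogeneous problem via an extension satisfying \eqref{lh}, Leray--Schauder on the exhausting domains $\Omega_l$, direct limit passage in the convergent case and Saint-Venant estimates in the divergent case) coincides with the paper's scheme. The gap is in the construction of ${\bf A}$, which is the actual content of the paper. You propose to realize the inner-boundary data by applying Lemma~\ref{extension} in a thin tubular neighborhood of each $\Gamma_i$, producing a field concentrated in $\{x:\dist(x,\Gamma_i)\leq\varepsilon\}$. This cannot work when $\mathbb{F}_i^{(inn)}\neq 0$: Lemma~\ref{extension} requires $\int_{\mathcal L}{\bf h}\cdot{\bf n}\,dS=0$, and for nonzero flux there is no single-valued stream function ${\bf E}$ in an annular neighborhood of the hole $G_i$ (the flux is the period of the stream function around $G_i$). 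Worse, even allowing a multivalued stream function, a solenoidal extension supported near a boundary component carrying nonzero flux cannot satisfy the Leray--Hopf inequality with arbitrarily small constant --- this is precisely Takeshita's counterexample \cite{Tak}, which the paper cites as the reason the naive boundary-layer construction fails. Your closing claim that condition \eqref{nc1} lets the local stream functions be ``reconciled into a single-valued global one'' is also false: single-valuedness requires each individual flux $\mathbb{F}_i^{(inn)}$ to vanish, not merely their sum with the other fluxes.

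The missing idea, which is where the symmetry hypotheses are genuinely used, is the system of \emph{virtual drain functions}: a field ${\bf b}^{(inn)}_i$ supported on a thin strip along the $x_1$-axis joining $\Gamma_i$ to an outer component, with profile $(-\mathbb{F}_i^{(inn)}s(x_2),0)$ chosen so that $\sup_t|t|s(t)\to0$; this transports each inner flux to the outer boundary, after which the residual boundary value has zero flux on every $\Gamma_i$ and Lemma~\ref{extension} applies. The accumulated flux on the outer components is then carried to infinity by Hopf cut-offs of the form $\Psi(\varepsilon\ln(\rho(\Delta_{\gamma})/\Delta_{\partial\Omega^+\setminus\Lambda}))$ built on semi-infinite curves inside the outlets (and, for the $\mathbb{I}$-type case, first drained to the last inner component and then out through an embedded sub-outlet). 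The Leray--Hopf inequality for these flux-carrying pieces is not obtained from Hardy's inequality relative to $\dist(x,\partial\Omega)$, as in your estimate, but from the Hardy inequality relative to the symmetry axis, exploiting that $w_2$ vanishes on $\{x_2=0\}$ for symmetric ${\bf w}$ together with the identity $({\bf w}\cdot\nabla){\bf w}=\nabla(\tfrac12|{\bf w}|^2)+(\partial_1w_2-\partial_2w_1)(-w_2,w_1)$. Your estimate is correct for the zero-flux part of ${\bf A}$, but the flux-carrying parts have supports that do not shrink to $\partial\Omega$ (they fill strips along the axis and wedges reaching to infinity in the outlets), so the proof of \eqref{lh} for them requires this separate, symmetry-dependent argument, which your proposal does not supply.
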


\setcounter{lem}{0}
\setcounter{equation}{0}
\section{Construction of the Extension}

We introduce the general idea of the construction of an extension ${\bf A}$ of the boundary value ${\bf a}.$ Notice that this general scheme works if the outer boundary intersects the $x_1$ axis at least at one point (for example, $\mathbb{V}$ and $\mathbb{Y}$ types domains). For the exception when a domain contains the $x_1$ axis, i.e., the outer boundary does not intersect the $x_1$ axis ($\mathbb{I}$ type domain) we combine the general scheme described below and the idea introduced in \cite{KW} (see subsection\,4.3 at the end of this section). In general case the extension ${\bf A}$ is constructed as the sum
$$\begin{array}{l}
{\bf A}={\bf B}^{(inn)}+\sum\limits_{m=1}^N{\bf B}^{(out)}_m+{\bf B}^{(flux)}.
\end{array}$$
In order to construct ${\bf B}^{(inn)},$ we ``remove" the fluxes  $\mathbb{F}^{(inn)}_i, \ \ i=1,...,I,$ (with the help of the virtual drain functions\footnote{The concept of virtual drain function was introduced by J. Fujita \cite{Fu1}.} ${\bf b}^{(inn)}_i$) to one of the the outer boundaries intersecting the $x_1$ axis, say $\Gamma_0^1,$ and then we extend the modified boundary value ${\bf a}-\sum\limits_{i=1}^I{\bf b}^{(inn)}_i$ which has zero fluxes on $\Gamma_i, \ i=1,...,I,$ into $\Omega.$ After this step we get the flux $\sum\limits_{i=1}^I\mathbb{F}^{(inn)}_i+\mathbb{F}^{(out)}_1$ on the outer boundary $\Gamma_0^1.$ Then by removing it to infinity and extending the modified boundary value ${\bf a}-\sum\limits_{i=1}^I{\bf b}^{(inn)}_i$ from $\Gamma_0^1$ into $\Omega$ we construct the extension ${\bf B}^{(out)}_1.$ Analogically we construct the rest of ${\bf B}^{(out)}_m, \ m=2,...,N.$ Notice that on the outer boundaries $\Gamma_0^m, \ m=2,...,N,$ we have the fluxes $\mathbb{F}^{(out)}_m, \ m=2,...,N,$ and after removing them to infinity, we extend the origin boundary value ${\bf a}$ from $\Gamma_0^m, \ m=2,...,N.$ Finally we need to compensate the fluxes over the cross sections of outlets to infinity, i.e., we construct ${\bf B}^{(flux)}$ satisfying zero boundary conditions and having the given flux over each cross section of all outlets. Notice that if a domain has only two outlets symmetric to each other or one self-symmetric outlet then we can not additionally prescribe the fluxes over the cross sections of the mentioned outlets, i.e., we take ${\bf B}^{(flux)}=0.$
\\By constructing functions ${\bf B}^{(out)}_m, \ \ m=1,...,N,$ we can choose arbitrarily an outlet to which we "remove" the fluxes. We choose the "widest" outlet in order to minimize the Dirichlet integral. Choosing different outlet we may get, in general, different solutions of problem \eqref{prad00}.

\begin{remark}
{\rm Construction of the vector field ${\bf B}^{(inn)}$ is based on the idea of J. Fujita (see \cite{Fu1}). In order to construct the vector fields ${\bf B}^{(out)}$ and ${\bf B}^{(flux)}$ we applied some ideas introduced by V. A. Solonnikov (see \cite{Sol3}).}
\end{remark}

\begin{remark}
{\rm  Notice that the symmetry assumptions are crucial for the construction of the vector field ${\bf B}^{(inn)},$ satisfying the Leray-Hopf inequality. If the fluxes over connected components of the boundary are nonzero, there is a counterexample (see \cite{Tak}) 
showing  that for a general  bounded domain  it is impossible 
to extend the boundary value ${\bf a}$  as a solenoidal vector field ${\bf A},$ satisfying the Leray-Hopf 
inequality \eqref{lh}.   However, such an extension can be constructed under certain symmetry assumptions (see \cite{Fu1}). 
As soon as we have a suitable extension, we can apply the existence proof given in section\,3.2.}

\end{remark}

\subsection{$\mathbb{V}$ Type Domain}
In this subsection we consider 
\begin{equation}\label{prad}\left\{\begin{array}{rcl}
-\nu \Delta{\bf u}+\big({\bf u}\cdot \nabla\big){\bf u} +\nabla p &
= & {\bf f}\qquad \hbox{\rm in }\;\;\Omega,\\
\div\,{\bf u} &  = & 0  \qquad \hbox{\rm in }\;\;\Omega,\\
 {\bf u} &  = & {\bf a} \qquad \hbox{\rm on }\;\;\partial\Omega,
 \end{array}\right.\end{equation}
in the domain $\Omega=\Omega_0\cup D_1\cup D_2,$ where $D_1,$ $D_2$ is a pair of symmetric outlets (see Fig.\,1). According to Definition\,\ref{domain} in this special case the outer boundary consists of two unbounded disjoint connected components $\Gamma_0^1$ and $\Gamma_0^2.$ The fluxes over the outer boundaries are
$$\begin{array}{l}
\int\limits_{\Lambda^m}{\bf a}\cdot {\bf n}\,dS=\mathbb{F}_m^{(out)}, \ \ \ m=1,2,
\end{array}$$
and the fluxes over the inner boundaries are
$$\begin{array}{l}
\int\limits_{\Gamma_i}{\bf a}\cdot {\bf n}\,dS=\mathbb{F}_i^{(inn)}, \ \ \ i=1,...,I.
\end{array}$$
The necessary compatibility condition \eqref{nc1} can be rewritten as
\begin{equation*}\begin{array}{l}
\mathbb{F}^{(inn)}+\mathbb{F}^{(out)}+\mathbb{F}_1=0,
\end{array}\end{equation*}
where $$\mathbb{F}^{(inn)}=\sum\limits_{i=1}^I \mathbb{F}_i^{(inn)}, \ \ \mathbb{F}^{(out)}=\sum\limits_{m=1}^2 \mathbb{F}_m^{(out)}.$$
Hence, in this case we cannot prescribe additionally the flux $\mathbb{F}_1,$ i.e.,
$$\begin{array}{l}
\mathbb{F}_1=-\big(\mathbb{F}^{(inn)}+ \mathbb{F}^{(out)}\big)
\end{array}$$
and the extension ${\bf B}^{(flux)}$ can be taken equal to zero.

\subsubsection{Construction of the extension ${\bf B}^{(inn)}$}

We construct a symmetric solenoidal extension ${\bf B}^{(inn)}$ of the boundary value $\bf a$ from the inner boundary. This extension is constructed to satisfy the Leray-Hopf inequality. In a bounded domain if the fluxes over the connected components of the boundary do not vanish, one cannot expect that there exists such an extension (see the counterexample in \cite{Tak}). However, under the symmetry assumptions such an extension can be constructed. We follow the idea of Fujita \cite{Fu1} for a bounded symmetric domain. 
\begin{lem}\label{B_inn}
Let $\ab \in W^{1/2,2}(\Gamma)$ be a symmetric function. Then for $\forall \varepsilon > 0$ there exists a symmetric solenoidal extension ${\bf B}^{(inn)}$ in $\Omega$ satisfying the Leray-Hopf inequality, i.e., for every symmetric solenoidal ${\bf w} \in W^{1,2}_{loc}(\overline{\Omega})$ with ${\bf w}|_{\partial\Omega}=0$ the following estimate\footnote{Notice that the integral in \eqref{binn00} over $\Omega_{k+1}\setminus\Omega_k$ is equal to zero since ${\bf B}^{(inn)}=0$ in $\Omega_{k+1}\setminus\Omega_k.$}
\begin{equation}\label{binn00}\begin{array}{l}
\big|\int\limits_{\Omega_{k+1}}({\bf w} \cdot \nabla){\bf w} \cdot {\bf B}^{(inn)} dx\big| \leq c\,\varepsilon \int\limits_{\Omega_{k+1}}|\nabla {\bf w}|^2\,dx
\end{array}\end{equation}
holds.
\end{lem}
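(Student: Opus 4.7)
My plan is to decompose ${\bf B}^{(inn)}={\bf b}^{(inn)}+{\bf B}^*$, where ${\bf b}^{(inn)}$ is a Fujita-type virtual drain, supported in a thin tubular neighbourhood $U$ of a curve on the $x_1$-axis, whose only role is to carry the fluxes $\int_{\Gamma_i}{\bf b}^{(inn)}\cdot{\bf n}\,dS=\mathbb{F}_i^{(inn)}$, and ${\bf B}^*$ is the Hopf-type extension delivered by Lemma \ref{extension} applied to the modified trace $\ab-{\bf b}^{(inn)}|_\Gamma$. Since the modified trace has vanishing flux on every $\Gamma_i$, the hypothesis of Lemma \ref{extension} is satisfied. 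Both pieces will be compactly supported inside $\Omega_0$, so ${\bf B}^{(inn)}\equiv 0$ off a bounded subset of $\Omega$, as promised in the footnote.

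\textbf{Drain construction.} Each $\Gamma_i$ meets the $x_1$-axis by Definition \ref{domain}(ii); by symmetry the intersection contains at least two points. Fix a curve $\ell\subset\{x_2=0\}\cap\overline{\Omega_0}$ joining a point of $\Gamma_0^1$ to every $\Gamma_i$, let $U$ be a $\delta$-tubular neighbourhood of $\ell$, and inside $U$ take ${\bf b}^{(inn)}=\curl\psi_\delta$ with a symmetric scalar potential $\psi_\delta$ adapted from \cite{Fu1}, scaled so that its integrated boundary trace furnishes the prescribed fluxes $\mathbb{F}_i^{(inn)}$. A direct computation gives $|{\bf b}^{(inn)}|\le C/\delta$ and $|\nabla{\bf b}^{(inn)}|\le C/\delta^2$ on $U$, while $b_2$ and the singular ($\sim 1/\delta^2$) components of $\nabla{\bf b}^{(inn)}$ are odd in $x_2$ by the parity structure of $\psi_\delta$.

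\textbf{Verifying \eqref{binn00}.} For the ${\bf B}^*$-piece the argument is the standard Ladyzhenskaya one: the pointwise bound from Lemma \ref{extension}, combined with H\"older and Hardy's inequality (Lemma \ref{hardy}) relative to $\Gamma$, where ${\bf w}|_\Gamma=0$, yields the factor $\varepsilon$ once the Hopf cut-off is taken narrow enough. For the drain term, integration by parts using $\div{\bf w}=0$ and ${\bf w}|_{\partial\Omega}=0$ gives
\begin{equation*}
\int_{\Omega_{k+1}}({\bf w}\cdot\nabla){\bf w}\cdot{\bf b}^{(inn)}\,dx=-\int_{U}w_\alpha w_\beta\,\partial_\alpha b_\beta^{(inn)}\,dx.
\end{equation*}
Symmetry of ${\bf w}$ yields $w_2|_{\{x_2=0\}}=0$, so Hardy's inequality with respect to $\ell\subset\{x_2=0\}$ produces $\|w_2/x_2\|_{L^2(U)}\le c\|\nabla{\bf w}\|_{L^2(\Omega_{k+1})}$ and, since $|x_2|\le\delta$ on $U$, the pointwise bound $|w_2|\le\delta\,|w_2/x_2|$. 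The parity structure of $\psi_\delta$ is arranged so that every singular $1/\delta^2$ factor in $\nabla{\bf b}^{(inn)}$ appears paired with an odd-in-$x_2$ component of ${\bf w}\otimes{\bf w}$; each such term therefore gains a power of $\delta$ from the above pointwise bound, reducing the singularity to $1/\delta$. The remaining $1/\delta$ is absorbed via a two-dimensional Gagliardo-Nirenberg estimate on the tube, which produces the width factor $|U|\lesssim\delta$. Collecting, the drain contribution is dominated by $c\,\delta\,\|\nabla{\bf w}\|_{L^2(\Omega_{k+1})}^2$, and fixing $\delta$ proportional to $\varepsilon$ completes \eqref{binn00}.

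\textbf{Main obstacle.} The only real difficulty is the drain estimate: naively $|{\bf b}^{(inn)}|\sim 1/\delta$, so no smallness is available without extra structure---this is precisely the mechanism behind Takeshita's counterexample \cite{Tak} in the non-symmetric case. The delicate bookkeeping is in checking that every singular $\delta^{-2}$-factor coming from $\nabla{\bf b}^{(inn)}$ is paired with an odd $x_2$-factor so that Hardy's inequality converts it into $\delta^{-1}$, and that the last surviving $\delta^{-1}$ is then tamed by the width of the tube. This is the core of Fujita's technical contribution in \cite{Fu1}, and its adaptation to the multiply-connected inner boundary $\Gamma=\cup_i\Gamma_i$ is the heart of the present lemma.
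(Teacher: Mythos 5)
Your overall architecture — a flux-carrying drain ${\bf b}^{(inn)}$ supported in a thin set along the $x_1$-axis plus a Hopf-type extension of the zero-flux remainder via Lemma~\ref{extension}, with symmetrization at the end — is exactly the paper's. But the core of the lemma is the Leray--Hopf estimate for the drain term, and there your smallness mechanism does not close. You take a bump profile of height $C/\delta$ on a tube of width $\delta$ and try to extract the factor $\varepsilon$ from the tube width. Count the powers of $\delta$ in your own scheme: after integration by parts the drain term is $-\int_U w_1 w_2\,\partial_2 b_1\,dx$ with $|\partial_2 b_1|\sim\delta^{-2}$; Hardy applied to $w_2$ (which vanishes on the axis) gains one factor $\delta$, leaving $\delta^{-1}\int_U|w_1|\,|w_2/x_2|\,dx$. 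Since $w_1$ is the \emph{even} component, it does not vanish on the axis, so on a tube of width $\delta$ you get at best $\|w_1\|_{L^2(U)}\lesssim\delta^{1/2}\|\nabla{\bf w}\|$ (from the measure of $U$ via H\"older/Gagliardo--Nirenberg), and the term is still of order $\delta^{-1/2}\|\nabla{\bf w}\|^2$, which blows up rather than vanishes. No choice of $\delta$ proportional to $\varepsilon$ can work; the width of the tube is not the source of smallness.

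The paper's proof fixes the strip width $\delta$ and instead puts the Hopf trick into the \emph{profile}: $s(t)=\frac{1}{y_\kappa\delta}\beta_\kappa(t/\delta)$ is a normalized truncation of $\frac{1}{y_\kappa|t|}$, so that $\int s=1$ while $\sup_t|t|\,s(t)\le 1/y_\kappa\to0$ as $\kappa\to0$; this $\sup$ is the quantity that becomes $\varepsilon$. Equally important, the paper does not integrate by parts but uses the identity $({\bf w}\cdot\nabla){\bf w}=\nabla\big(\tfrac12|{\bf w}|^2\big)+(\partial_1w_2-\partial_2w_1)(-w_2,w_1)$, so that the horizontal drain $(b_1(x_2),0)$ pairs only with $\omega\cdot(-w_2)\,b_1$: the single factor divided by $|x_2|$ is $w_2$, precisely the component that vanishes on the axis and is amenable to Hardy, while the other factor is the vorticity, estimated directly in $L^2$. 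Your integration by parts instead produces the product $w_1w_2\,\partial_2b_1$, which forces you to control $w_1$ near the axis where it has no reason to be small — this is the step that cannot be repaired within your scheme. To fix the proof you need both ingredients: the $1/|t|$-type profile (smallness from $\sup|t|s(t)$, not from the support) and the curl decomposition of the convective term (so that only the odd component meets the Hardy weight).
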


\begin{proof}
In order not to lose the main idea in technical details, we assume that there is only one connected component of the inner boundary, i.e. $\Gamma_1.$ The same construction works for domains with finitely many inner boundaries (see Remark\,\ref{r1} at the end of this section).
The proof is composed of two parts. We construct firstly a solenoidal function in $\Omega_0$ which ``takes" the flux of the boundary value ${\bf a}$ from $\Gamma_1$ to $\Gamma^1_0$. Then the second part is to construct the extension of the boundary  value, having zero flux on $\Gamma_1,$ to $\Omega.$\\
Let $0<\kappa<1/2$ be a number and $\beta_\kappa(t)\in C_0^\infty(-\infty,+\infty)$ be an even function such that $\beta_\kappa(t)\leq \dfrac{1}{t},\ {\rm for} \ 0<t<+\infty$ and
\begin{equation*}\beta_\kappa(t)=\begin{cases}
0, \ \  \ 1\leq t < +\infty,\\
\dfrac{1}{t}, \ \  \ \kappa \leq t \leq 1/2.
\end{cases}\end{equation*}
Define $y_\kappa= \int\limits_{-\infty}^\infty \beta_\kappa(t) dt$. Note that $y_\kappa \to +\infty$ as $\kappa \to 0$. Let $\delta$ be another small positive number. Then we define a smooth positive function $s(t)=\dfrac{1}{y_\kappa\, \delta}\beta_\kappa\Big(\dfrac{t}{\delta}\Big)$. Note that $\supp s\subset [-\delta,\delta]$. Moreover, we have 
$$\begin{cases}
\int\limits_{-\infty}^\infty s(t)\,dt=\int\limits_{-\delta}^{\delta} s(t)\,dt=1,\\
0\leq s(t)\leq \dfrac{1}{y_\kappa \delta}\cdot\dfrac{\delta}{|t|}=\dfrac{1}{y_\kappa |t|},\quad t\neq 0.
\end{cases}
$$
Therefore, we have
\begin{equation}\label{aux}
\sup_t |t|s(t) \to 0 \quad \rm{as} \ \kappa \to 0.
\end{equation}
Now we construct a thin strip $\Upsilon=[x_0+\eta,x_1-\eta]\times [-\delta, \delta],$ where $\eta$ is a small positive number, connecting the inner boundary $\Gamma_1$ and the outer boundary $\Gamma_0^1$ (see Fig.\,1).
\begin{figure}[ht!]
\centering
\includegraphics[scale=0.35]{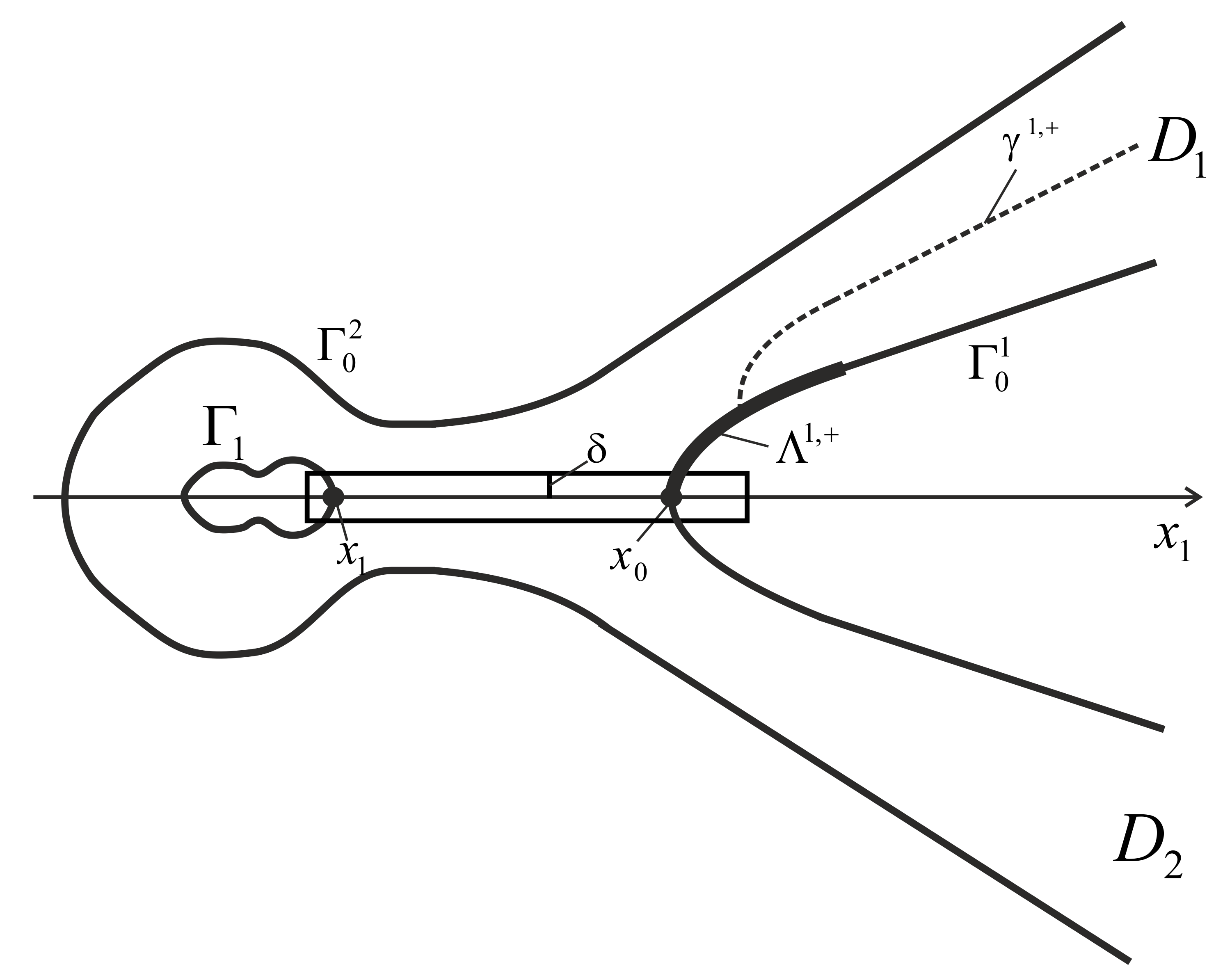}
\caption{Domain $\Omega$}
\end{figure}
\\We define a solenoidal function compactly supported in $\Upsilon$ which takes the flux from the inner boundary $\Gamma_1$ to the outer boundary $\Gamma^1_0:$ 
\begin{equation}\label{bin}\begin{array}{l}
{\bf b}^{(inn)}(x)=\begin{cases}
(-\mathbb{F}_1^{(inn)}s(x_2),0) \quad \rm{in} \ \Upsilon,\\
\\
(0,0) \quad \rm{in} \ \overline{\Omega} \setminus \Upsilon.
\end{cases}
\end{array}\end{equation}
Since the vector field ${\bf b}^{(inn)}$ is solenoidal and vanishes on the upper and lower boundaries of $\Upsilon$, we have

$$
\begin{array}{l}
0=\int\limits_{\Upsilon\cap G_1} {\rm div}\,{\bf b}^{(inn)}\,dx=\int\limits_{\partial(\Upsilon \cap  G_1)}{\bf b}^{(inn)}\cdot \nn \,dS\\
\\
=\int\limits_{\Upsilon \cap \Gamma_1}{\bf b}^{(inn)}\cdot \nn\, dS+\int\limits_{(x_1-\eta)\times[-\delta,\delta]}{\bf b}^{(inn)}\cdot \nn \,dS\\
\\
=\int\limits_{\Upsilon \cap \Gamma_1}{\bf b}^{(inn)}\cdot \nn\, dS+
\mathbb{F}_1^{(inn)}\int\limits_{-\delta}^{\delta} s(x_2)\,dx_2
=\int\limits_{\Upsilon \cap \Gamma_1}{\bf b}^{(inn)}\cdot \nn \,dS+\mathbb{F}_1^{(inn)}.
\end{array}$$
Notice that the outward normal vector on $\partial(\Upsilon\cap G_1)|_{\Gamma_1}$ shows the opposite direction than the one on $\Gamma_1.$ Therefore, it follows that 
$$\begin{array}{l}
\int\limits_{\Gamma_1}{\bf b}^{(inn)}\cdot \nn dS=\mathbb{F}_1^{(inn)}.
\end{array}$$
Let $${\bf h}^{(inn)}={\bf a}\big|_{\Gamma_1}-{\bf b}^{(inn)}\big|_{\Gamma_1}.$$
Then we have
\begin{equation}\label{h0}\begin{array}{l}
\int\limits_{\Gamma_1}{\bf h}^{(inn)}\cdot {\bf n}\,dS=\int\limits_{\Gamma_1}(\ab-{\bf b}^{(inn)})\cdot {\bf n}\,dS=0.
\end{array}\end{equation}
By the condition \eqref{h0}, there exists (see Lemma\,\ref{extension}) an extension ${\bf b}_0^{(inn)}$ of the function ${\bf h}^{(inn)}$ such that ${\rm supp}\,{\bf b}_0^{(inn)}$ is contained in a small neighborhood of $\Gamma_1,$
\begin{equation*}\label{b0}\begin{array}{l}
{\rm div}\,{\bf b}_0^{(inn)}=0, \ \ {\bf b}_0^{(inn)}|_{\Gamma_1}={\bf h}^{(inn)},
\end{array}
\end{equation*}
and ${\bf b}_0^{(inn)}$ satisfies the Leray--Hopf inequality
\begin{equation}\label{binn0}\begin{array}{l}
\big|\int\limits_{\Omega_{k+1}}({\bf w} \cdot \nabla){\bf w} \cdot {\bf b}^{(inn)}_0 dx\big| \leq c\,\varepsilon \int\limits_{\Omega_{k+1}}|\nabla {\bf w}|^2\,dx.
\end{array}\end{equation}
\\Notice that the vector field ${\bf b}^{(inn)}_0$ is not necessary symmetric. However, since the boundary value ${\bf h}^{(inn)}$ is symmetric, ${\bf b}^{(inn)}_0$ can be symmetrized to $\tilde{\bf b}^{(inn)}_{0}= (\tilde{b}^{(inn)}_{0,1},\tilde{b}^{(inn)}_{0,2})$ as follows:
\begin{equation}\label{symm}\begin{array}{l}
\tilde{b}^{(inn)}_{0,1}(x,\varepsilon)= \dfrac{1}{2}\bigg(b^{(inn)}_{0,1} (x_1,x_2,\varepsilon) +  b^{(inn)}_{0,1} (x_1,-x_2,\varepsilon)\bigg),\ \ x\in\Omega\\
\\
\tilde{b}^{(inn)}_{0,2}(x,\varepsilon)=\dfrac{1}{2}\bigg(b^{(inn)}_{0,2} (x_1,x_2,\varepsilon) -  b^{(inn)}_{0,2} (x_1,-x_2,\varepsilon)\bigg), \ \ x\in\Omega.
\end{array}
\end{equation}
\\Set 
$$\mathbf B^{(inn)}={\bf b}^{(inn)}\big|_{\Upsilon\cap\overline{\Omega}}+\tilde{{\bf b}}^{(inn)}_0.$$
It remains to prove that ${\bf b}^{(inn)}|_{\Upsilon\cap\overline{\Omega}}$ satisfies the Leray-Hopf inequality. For the sake of simplicity we keep the notation ${\bf b}^{(inn)}$ instead of ${\bf b}^{(inn)}|_{\Upsilon\cap\overline{\Omega}}$ for the rest of this paper and we extend it by zero into the whole domain $\Omega.$

Let ${\bf w} =(w_1,w_2)\in W^{1,2}_{loc}(\overline{\Omega}), \ {\bf w}|_{\partial\Omega}=0,$ be a symmetric and solenoidal vector field. Then we use the well known following equality
\begin{equation}\begin{array}{l}{\label{rot}}
({\bf w} \cdot \nabla){\bf w}=\nabla (\dfrac{1}{2}|{\bf w}|^2)+(\dfrac{\partial w_2}{\partial x_1}-\dfrac{\partial w_1}{\partial x_2})(-w_2,w_1).
\end{array}\end{equation}
\\Due to \eqref{rot} and \eqref{bin}, we have
\begin{equation*}\begin{array}{rcl}
\big|\int\limits_{\Omega_{k+1}}({\bf w} \cdot \nabla){\bf w}\cdot{\bf b}^{(inn)}dx\big|&=|\F_1^{(inn)}|\int\limits_{\Upsilon\cap\overline{\Omega}}\big|    \dfrac{\partial w_2}{\partial x_1}-\dfrac{\partial w_1}{\partial x_2}\big|\dfrac{1}{|x_2|}|w_2|\,|x_2|\,s(x_2)dx\\
\\
&\leq |\F_1^{(inn)}| \sup\limits_{x_2}(|x_2|s(x_2))\int\limits_{\Upsilon\cap\overline{\Omega}}
\big|\dfrac{\partial w_2}{\partial x_1}-\dfrac{\partial w_1}{\partial x_2}\big|\dfrac{|w_2|}{|x_2|}dx.
\end{array}\end{equation*}
By applying the H{\"o}lder inequality, the expression above is less than 
$$\begin{array}{l}|\F_1^{(inn)}| \sup\limits_{x_2}(|x_2|s(x_2))\,(\int\limits_{\Upsilon\cap\overline{\Omega}}|\dfrac{\partial w_2}{\partial x_1}-\dfrac{\partial w_1}{\partial x_2}|^2 dx)^{1/2} \cdot(\int\limits_{\Upsilon\cap\overline{\Omega}} \dfrac{|w_2|^2}{|x_2|^2}dx)^{1/2}.\end{array}$$
Furthermore,
$$\begin{array}{l}\int\limits_{\Upsilon\cap\overline{\Omega}} |\dfrac{\partial w_2}{\partial x_1}-\dfrac{\partial w_1}{\partial x_2}|^2 dx\leq 2\int\limits_{\Omega_{k+1}}|\nabla {\bf w}|^2 dx\end{array},$$
and applying \footnote{Here we use the fact that due to the symmetry assumptions,  the second component $w_2$ of ${\bf w}$ vanishes on the $x_1$-axis (in trace sense).} the Hardy inequality we obtain the following estimate
$$\begin{array}{l}\int\limits_{\Upsilon\cap\overline{\Omega}} \dfrac{|w_2|^2}{|x_2|^2}dx\leq \int\limits_{\Omega_{k+1}}|\nabla {\bf w}|^2 dx.\end{array}$$
\\Since $\sup\limits_{0\leq t \leq \eta}(|t|s(t))$ goes to zero as $\kappa$ goes to zero, we can choose $\kappa$ so small that $\sup\limits_{x_2}(|x_2|s(x_2))$ is less than $\dfrac{\varepsilon}{2\,\sqrt{2}}.$
Finally, we obtain the Leray-Hopf inequality \eqref{binn00}. 
\end{proof}
\begin{figure}[ht!]
\centering
\includegraphics[scale=0.35]{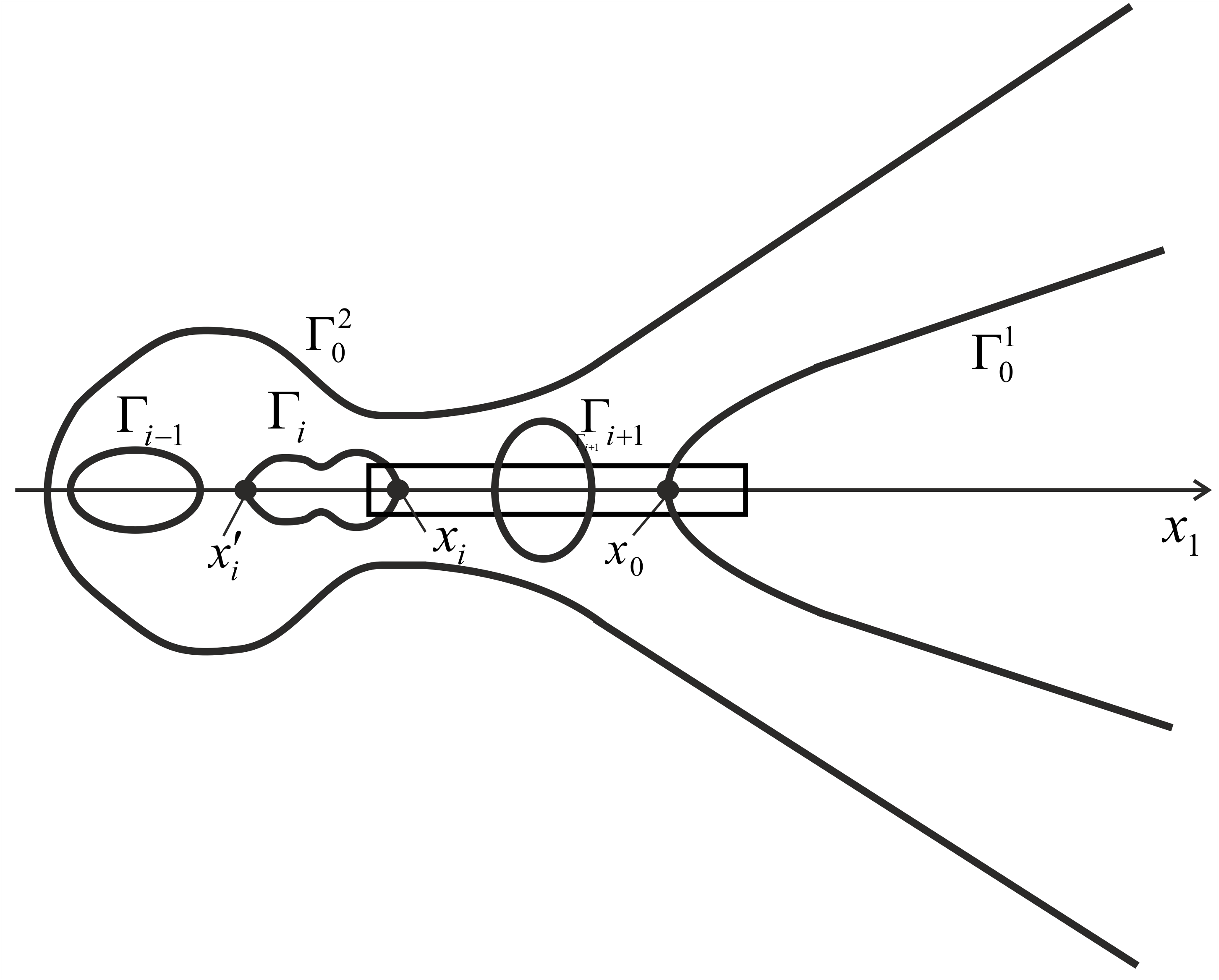}
\caption{Domain $\Omega$}
\end{figure}
\begin{remark}\label{r1}
{\rm The same construction works for domains with finitely many inner boundaries. For each $\Gamma_i$, $i=1,\dots,I,$ the $x_1$-axis intersects $\Gamma_i$ at two points $(x_i,0)$ and $(x_i',0),$ where $x_i>x_i'.$
In the same manner as before we can construct a thin strip $[x_0+\eta, x_i-\eta]\times[-\delta,\delta]$ connecting $\Gamma^1_0$ and $\Gamma_i, \ i=1,...,I$ (see Fig.\,2). We define a function ${\bf b}^{(inn)}_i$ on the thin strip as we did for $\Gamma_1,$ which satisfies
$$\begin{array}{l}
\int\limits_{\Gamma_j} {\bf b}^{(inn)}_i\cdot \nn\,dS=\begin{cases}
\mathbb{F}^{(inn)}_i, \ j=i,\\
0, \ j\neq i, \ j=1,...,I.
\end{cases}
\end{array}$$
\\Note that the flux of ${\bf b}_i^{(inn)}$ across $\Gamma_j$ cancel each other for $j<i$ and ${\bf b}_i^{(inn)}$ vanishes on $\Gamma_j$ for $j>i.$
\\Set ${\bf b}^{(inn)}=\sum\limits_{i=1}^I {\bf b}_{i}^{(inn)}.$ Then
$
\int\limits_{\Gamma_i} ({\bf a} - {\bf b}^{(inn)})\cdot \nn\,dS=0,\quad i=1,\dots,I.$
\\Using the usual technique we can find a symmetric solenoidal extension $\tilde{{\bf b}}_{0}^{(inn)}$ of the function $\big({\bf a} - {\bf b}^{(inn)}\big)|_{\cup_{i=1}^I\Gamma_i}$ satisfying the Leray-Hopf inequality. Then
$${\bf B}^{(inn)}={\bf b}^{(inn)}+\tilde{{\bf b}}_{0}^{(inn)}$$
is a suitable extension of the boundary value ${\bf a}$ from the inner boundaries $\Gamma_1,...,\Gamma_I.$}
\end{remark}

\subsubsection{Construction of ${\bf B}^{(out)}$}

After the construction of the extension ${\bf B}^{(inn)}$ of the boundary value ${\bf a}$ from the inner boundaries we need to construct an extension of the outer boundary value. Since the outer boundary consists of two not connected disjoint components $\Gamma_0^1$ and $\Gamma_0^2,$ we have to construct two vector fields ${\bf B}_1^{(out)}$ and ${\bf B}_2^{(out)}$ which extend the boundary value from the connected components $\Gamma_0^{1}$ and $\Gamma_0^{2},$ respectively. More precisely, ${\bf B}_1^{(out)}$ extends the modified boundary value ${\bf a}-{\bf b}^{(inn)}$ from the connected component $\Gamma_0^{1}$ and ${\bf B}_2^{(out)}$ extends the origin boundary value ${\bf a}$ from the connected component $\Gamma_0^{2}.$
Hence, the extension of the outer boundary $\Gamma_0^1\cup\Gamma_0^2$ has to be constructed as the sum
\begin{equation}\begin{array}{l}\label{B}
{\bf B}^{(out)}={\bf B}_1^{(out)}+{\bf B}_2^{(out)},\end{array}\end{equation}
where $$\begin{array}{l}
{\bf B}_1^{(out)}={\bf a}-{\bf b}^{(inn)} \ \ {\rm on} \ \ \Gamma_0^1\cap\Omega_0, \ \ \ {\bf B}_1^{(out)}=0 \ \ {\rm on} \ \ \partial\Omega\setminus (\Gamma_0^1\cap\Omega_0),
\end{array}$$
$$\begin{array}{l}
{\bf B}_2^{(out)}={\bf a} \ \ {\rm on} \ \ \Gamma_0^2\cap\Omega_0, \ \ \ {\bf B}_2^{(out)}=0 \ \ {\rm on} \ \ \partial\Omega\setminus (\Gamma_0^2\cap\Omega_0).
\end{array}$$
Let  $$\Omega^+ = \{x\in\Omega: x_2>0 \}, \ \ \Gamma_0^{1,+}=\Gamma_0^1\cap\Omega^+.$$
Let us start with the construction of ${\bf B}_1^{(out)}.$ Firstly we construct ${\bf b}_{+}^{(out)}$ in the domain $\Omega^+.$ Take any point $x_+\in\Lambda^{1,+}=\Lambda^1\cap\overline{\Omega}_+\subset \partial\Omega_0\cap\Gamma^{1,+}_0,$ where $\Lambda^1$ is the support of the boundary value ${\bf a}$ on $\Gamma^1_0.$ Introduce a smooth semi-infinite simple curve $\gamma^{1,+}\subset\Omega^+$ intersecting $\Gamma_0^{1,+}$ at the point $x_+$ such that the distance from the curve $\gamma^{1,+}$ to $\Gamma_0^{1,+}\setminus\Lambda^{1,+}$ is not less than the positive number $d_0.$ 
\\Then define a Hopf cut-off function by the formula
\begin{equation}\label{xi}\begin{array}{l}
\xi_+(x,\varepsilon)=\Psi\bigg(\varepsilon\,{\rm ln}\dfrac{\rho(\Delta_{\gamma^{1,+}}(x))}{\Delta_{\partial\Omega^+\setminus\Lambda^{1,+}}(x)}\bigg),
\end{array}
\end{equation}
where $0\leq\Psi\leq 1$ and $\rho$ are the smooth monotone cut-off functions:
\begin{equation}\label{fi}
\begin{array}{l}
\Psi(t)=\begin{cases}
0, \ \ t\leq 0,\\
1, \ \ t\geq 1,
\end{cases}
\end{array}
\end{equation}

\begin{equation}\label{ro}
\begin{array}{l}
\rho(\tau)=\begin{cases}
\dfrac{a_1}{2}d_0, \ {\rm for} \ \tau\leq \dfrac{a_2}{2}d_0,\\
\tau, \ {\rm for} \ \tau\geq a_2 d_0,
\end{cases}
\end{array}
\end{equation}
with the constants $a_1, a_2$  from the inequalities \eqref{rd}.

\begin{lem}\label{xl}
The function $\xi_+ (x,\varepsilon)$ is equal to zero at those points of $\Omega^+$ where $\rho(\Delta_{\gamma^{1,+}} (x))\leq\Delta_{\partial\Omega^+\setminus\Lambda^{1,+}} (x),$ while the $d_0/2$-neighborhood of the curve $\gamma^{1,+}$ is contained in this set. At those points where 
$\Delta_{\partial\Omega^+\setminus\Lambda^{1,+}} (x)\leq e^{-1/\varepsilon}\rho(\Delta_{\gamma^{1,+}} (x)),$ the function $\xi_+ (x,\varepsilon)=1.$ The following inequalities
\begin{equation}\label{xil}\begin{array}{l}
\Big|{\dfrac{\partial\xi_+(x,\varepsilon)}{\partial x_k}}\Big|\leq\dfrac{
c\,\varepsilon}{\Delta_{\partial\Omega^+\setminus\Lambda^{1,+}} (x)},\qquad \Big|\dfrac{\partial^2\xi_{+}(x,\varepsilon)}{\partial
x_k\partial x_l}\Big|\leq\dfrac{c\,\varepsilon}{\Delta^2_{\partial\Omega^+\setminus\Lambda^{1,+}}(x)}
\end{array}\end{equation}
hold.
\end{lem}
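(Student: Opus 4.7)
The plan is to verify each of the four assertions directly from the definition of $\xi_+$ and the elementary properties of the cut-off functions $\Psi$, $\rho$ together with the inequalities (\ref{rd}) for the regularized distance.

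First I would handle the two set-level statements. Since $\Psi(t)=0$ for $t\le 0$, the inequality $\rho(\Delta_{\gamma^{1,+}}(x))\le\Delta_{\partial\Omega^+\setminus\Lambda^{1,+}}(x)$ makes the argument of $\Psi$ in (\ref{xi}) nonpositive, hence $\xi_+(x,\varepsilon)=0$, giving the first assertion. For the containment of the $d_0/2$-neighborhood of $\gamma^{1,+}$ in this zero set, I would argue as follows. If $d_{\gamma^{1,+}}(x)\le d_0/2$, then by (\ref{rd}) $\Delta_{\gamma^{1,+}}(x)\le a_2d_0/2$, so by the definition (\ref{ro}) of $\rho$ we have $\rho(\Delta_{\gamma^{1,+}}(x))=a_1d_0/2$. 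Since by construction $\dist(\gamma^{1,+},\partial\Omega^+\setminus\Lambda^{1,+})\ge d_0$, the triangle inequality gives $d_{\partial\Omega^+\setminus\Lambda^{1,+}}(x)\ge d_0/2$, whence $\Delta_{\partial\Omega^+\setminus\Lambda^{1,+}}(x)\ge a_1d_0/2$ by (\ref{rd}). The required inequality then holds. The third assertion is immediate: the hypothesis $\Delta_{\partial\Omega^+\setminus\Lambda^{1,+}}(x)\le e^{-1/\varepsilon}\rho(\Delta_{\gamma^{1,+}}(x))$ makes the argument of $\Psi$ at least $1$, and $\Psi\equiv 1$ on $[1,\infty)$.

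For the derivative estimates I would differentiate (\ref{xi}) by the chain rule:
\begin{equation*}
\frac{\partial\xi_+}{\partial x_k}=\varepsilon\,\Psi'(\cdot)\left[\frac{\rho'(\Delta_{\gamma^{1,+}})}{\rho(\Delta_{\gamma^{1,+}})}\,\frac{\partial\Delta_{\gamma^{1,+}}}{\partial x_k}-\frac{1}{\Delta_{\partial\Omega^+\setminus\Lambda^{1,+}}}\,\frac{\partial\Delta_{\partial\Omega^+\setminus\Lambda^{1,+}}}{\partial x_k}\right].
\end{equation*}
The crucial observation is that on the support of $\Psi'$ we have $\rho(\Delta_{\gamma^{1,+}}(x))\ge\Delta_{\partial\Omega^+\setminus\Lambda^{1,+}}(x)$, which lets me convert any bound involving $1/\rho$ into one involving $1/\Delta_{\partial\Omega^+\setminus\Lambda^{1,+}}$. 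Using $|\partial_k\Delta_{\mathcal M}|\le a_3$ from (\ref{rd}) controls the second bracketed term by $a_3/\Delta_{\partial\Omega^+\setminus\Lambda^{1,+}}$. For the first term I split into cases according to the definition (\ref{ro}) of $\rho$: in the constant regime $\rho'=0$ and the term vanishes; in the linear regime $\rho'=1$, $\rho(\Delta_{\gamma^{1,+}})=\Delta_{\gamma^{1,+}}\ge\Delta_{\partial\Omega^+\setminus\Lambda^{1,+}}$, giving the desired bound; in the transition interval one uses $\rho\ge a_1d_0/2$, $|\rho'|\le c$, together with $\Delta_{\partial\Omega^+\setminus\Lambda^{1,+}}\le\rho(\Delta_{\gamma^{1,+}})\le a_2d_0$, which again yields a bound by $c/\Delta_{\partial\Omega^+\setminus\Lambda^{1,+}}$. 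Combining, one obtains the first inequality in (\ref{xil}). The second-order estimate is obtained by differentiating once more and applying the same case analysis, using additionally the bound $|D^2\Delta_{\mathcal M}|\le a_3/d_{\mathcal M}$ from (\ref{rd}).

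I do not anticipate a genuine obstacle here; the proof is mainly careful bookkeeping. The only slightly subtle point is the second-derivative estimate, where several terms appear after applying Leibniz's rule and each has to be regrouped using the inequality $\rho(\Delta_{\gamma^{1,+}})\ge\Delta_{\partial\Omega^+\setminus\Lambda^{1,+}}$ (valid on $\supp\Psi'\cup\supp\Psi''$) to replace every occurrence of $1/\rho$ or $1/\Delta_{\gamma^{1,+}}$ by $1/\Delta_{\partial\Omega^+\setminus\Lambda^{1,+}}$. Once this bookkeeping is set up, the constants absorb into the single $c$ appearing in (\ref{xil}) and the lemma follows.
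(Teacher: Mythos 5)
Your proposal is correct and follows essentially the same route as the paper: the zero/one statements and the $d_0/2$-neighborhood containment are obtained from the definitions of $\Psi$, $\rho$ and the inequalities \eqref{rd} exactly as in the paper's proof. The only difference is that you carry out the chain-rule computation for \eqref{xil} explicitly, whereas the paper defers this "direct computation" to the proof of Lemma 2 in \cite{PilSol}; your case analysis on the regimes of $\rho$ and the use of $\rho(\Delta_{\gamma^{1,+}})\ge\Delta_{\partial\Omega^+\setminus\Lambda^{1,+}}$ on $\supp\Psi'$ is precisely the bookkeeping that reference supplies.
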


\begin{proof}
Due to the definition of $\Psi,$ the function $\xi_+ (x,\varepsilon)=0$ if $\rho(\Delta_{\gamma^{1,+}})\leq\Delta_{\partial\Omega^+\setminus\Lambda^{1,+}}$ and $\xi_+ (x,\varepsilon)=1$ if $\Delta_{\partial\Omega^+\setminus\Lambda^{1,+}}\leq e^{-1/\varepsilon}\rho(\Delta_{\gamma^{1,+}}).$ Let $d_{\gamma^{1,+}}={\rm dist}(x,\gamma^{1,+})$ and $d_{\partial\Omega^+\setminus\Lambda^{1,+}}={\rm dist}(x,\Omega^+\setminus\Lambda^{1,+}).$ If $d_{\gamma^{1,+}}\leq\dfrac{d_0}{2},$ then $d_{\partial\Omega^+\setminus\Lambda^{1,+}}\geq d_0-d_{\gamma^{1,+}}\geq\dfrac{d_0}{2}$ and $\Delta_{\gamma^{1,+}}\leq a_2 d_{\gamma^{1,+}}\leq \dfrac{a_2 d_0}{2}.$ Therefore if $\Delta_{\gamma^{1,+}}\leq \dfrac{a_2 d_0}{2},$ then 
$\rho(\Delta_{\gamma^{1,+}})=a_1\dfrac{d_0}{2}\leq a_1 d_{\partial\Omega^+\setminus\Lambda^{1,+}}\leq \Delta_{\partial\Omega^+\setminus\Lambda^{1,+}}$ and $\xi_+ (x,\varepsilon)=0$ (the constants $a_1$ and $a_2$ are from the inequalities \eqref{rd}).

The estimates \eqref{xil} follow by direct computation using the definitions of the functions $\Psi,$ $\rho,$ the properties of the regularized distance (see inequality \eqref{rd}) and the fact that ${\rm supp}\nabla\xi_+$ is contained in the set where $\Delta_{\partial\Omega^+\setminus\Lambda^{1,+}}\leq \rho(\Delta_{\gamma^{1,+}})$ (for details of computation see the proof of Lemma\,2 in \cite{PilSol}).
\end{proof}

Since $\gamma^{1,+}$ decomposes $\Omega^+$ into two parts situated above and below $\gamma^{1,+},$ we define
$\tilde{\xi}_+(x,\varepsilon)=\xi_+(x,\varepsilon)$ above  $\gamma^{1,+}$ and $\tilde{\xi}_+(x,\varepsilon)=0$ below  $\gamma^{1,+}.$ Then we introduce the vector field in the domain $\Omega^+:$
\begin{equation}\label{vd}\begin{array}{l}
{\bf b}_{+}^{(out)} (x,\varepsilon)=-\dfrac{\mathbb{F}^{(inn)}+\mathbb{F}_1^{(out)}}{2}\,
\bigg(\dfrac{\partial\tilde{\xi}_+(x,\varepsilon)}{\partial x_2},-\dfrac{\partial\tilde{\xi}_+(x,\varepsilon)}{\partial x_1}\bigg).
\end{array}
\end{equation}

\begin{lem}
The vector field ${\bf b}_{+}^{(out)}$ is solenoidal, infinitely differentiable, vanishes near $\partial\Omega^+\setminus\Lambda^{1,+}$ and $\gamma^{1,+},$ the support of ${\bf b}_{+}^{(out)}$ is contained in the set of points satisfying the inequalities $e^{-1/\varepsilon}\rho(\Delta_{\gamma^{1,+}})\leq\Delta_{\partial\Omega^+\setminus\Lambda^{1,+}}\leq \rho(\Delta_{\gamma^{1,+}}).$ Moreover,
\begin{equation}\label{int}\begin{array}{l}
\int\limits_{\Lambda^{1,+}}{\bf b}_{+}^{(out)}\cdot {\bf n}\,dS
=\dfrac{\mathbb{F}^{(inn)}+\mathbb{F}_1^{(out)}}{2},
\end{array}
\end{equation}
and the following estimates
\begin{equation}\label{bdm2}\begin{array}{l}
|{\bf b}_{+}^{(out)}(x,\varepsilon)|\leq
\dfrac{c\,\varepsilon|\mathbb{F}^{(inn)}+\mathbb{F}_1^{(out)}|}{d_{\partial\Omega^+\setminus\Lambda^{1,+}}(x)}, \ \ \ x\in\Omega^+,\\
\\
|\nabla{\bf b}_{+}^{(out)}(x,\varepsilon)|\leq
\dfrac{c\,\varepsilon|\mathbb{F}^{(inn)}+\mathbb{F}_1^{(out)}|}{d^2_{\partial\Omega^+\setminus\Lambda^{1,+}}(x)}, \ \ \ x\in\Omega^+,
\end{array}\end{equation}

\begin{equation}\label{bdm3}\begin{array}{c}
|{\bf b}_{+}^{(out)}(x,\varepsilon))|\leq
\dfrac{c(\varepsilon)
|\mathbb{F}^{(inn)}+\mathbb{F}_1^{(out)}|}{g(z^{(1)}_1)}, \ \ \ x\in D_1,\\
\\
|\nabla{\bf b}_{+}^{(out)}(x,\varepsilon)|\leq
\dfrac{c(\varepsilon)|\mathbb{F}^{(inn)}+\mathbb{F}_1^{(out)}|}{g^2(z^{(1)}_1)}, \ \ \  x\in D_1,\end{array}\end{equation}

\begin{equation}\label{bdm333}\begin{array}{c}
|{\bf b}_{+}^{(out)}(x,\varepsilon))|+|\nabla{\bf b}_{+}^{(out)}(x,\varepsilon))|\leq c(\varepsilon)|\mathbb{F}^{(inn)}+\mathbb{F}_1^{(out)}|, \ \ \ x\in {\rm supp}\,{\bf b}_{+}^{(out)}, 
\end{array}\end{equation}
hold. The constant $c$ in \eqref{bdm2} is independent of
$\varepsilon$. 
\end{lem}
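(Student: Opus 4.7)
The plan is to verify each claim in sequence, with the main effort going into the flux identity \eqref{int} and the outlet estimates \eqref{bdm3}.

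\textbf{Solenoidality and smoothness.} I would write $F(x,\varepsilon)=-\tfrac{1}{2}(\mathbb{F}^{(inn)}+\mathbb{F}_1^{(out)})\,\tilde{\xi}_+(x,\varepsilon)$ so that ${\bf b}_+^{(out)}=(\partial_{x_2}F,-\partial_{x_1}F)$. Then $\div\,{\bf b}_+^{(out)}\equiv 0$ by equality of mixed partials. For $C^\infty$-regularity the crucial input is Lemma~\ref{xl}: since $\xi_+\equiv 0$ in the $d_0/2$-neighborhood of $\gamma^{1,+}$, the function $\tilde{\xi}_+$ (equal to $\xi_+$ above $\gamma^{1,+}$ and $0$ below) matches smoothly across $\gamma^{1,+}$. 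The description of $\supp\,{\bf b}_+^{(out)}$ is likewise immediate from Lemma~\ref{xl}: outside the transition set $\{e^{-1/\varepsilon}\rho(\Delta_{\gamma^{1,+}})\leq \Delta_{\partial\Omega^+\setminus\Lambda^{1,+}}\leq \rho(\Delta_{\gamma^{1,+}})\}$ the cut-off $\xi_+$ is locally constant, so $\nabla\tilde{\xi}_+$ and hence ${\bf b}_+^{(out)}$ vanish.

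\textbf{Flux identity.} Since ${\bf b}_+^{(out)}$ is the $\pi/2$-rotation of $\nabla F$, the flux through any curve $C$ equals the increment of $F$ along $C$. Applied to $C=\Lambda^{1,+}$, this reduces \eqref{int} to the difference of the endpoint values of $\tilde{\xi}_+$. At the endpoint on the ``above'' side of $\gamma^{1,+}$, the adjacent boundary piece belongs to $\Gamma_0^{1,+}\setminus\Lambda^{1,+}$, so $\Delta_{\partial\Omega^+\setminus\Lambda^{1,+}}\to 0$ while $\rho(\Delta_{\gamma^{1,+}})$ stays bounded below, forcing $\xi_+\to 1$; at the opposite endpoint we sit below $\gamma^{1,+}$ and $\tilde{\xi}_+=0$ by construction. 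The resulting jump of $1$, with the proper outward-normal orientation, produces the stated flux $\tfrac{1}{2}(\mathbb{F}^{(inn)}+\mathbb{F}_1^{(out)})$.

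\textbf{Pointwise estimates.} On $\supp\,{\bf b}_+^{(out)}$ the obvious bounds $|{\bf b}_+^{(out)}|\leq c|\mathbb{F}^{(inn)}+\mathbb{F}_1^{(out)}||\nabla\xi_+|$ and $|\nabla{\bf b}_+^{(out)}|\leq c|\mathbb{F}^{(inn)}+\mathbb{F}_1^{(out)}||\nabla^2\xi_+|$, combined with Lemma~\ref{xl} and \eqref{rd}, immediately yield \eqref{bdm2}. For \eqref{bdm3} I would exploit the support lower bound $\Delta_{\partial\Omega^+\setminus\Lambda^{1,+}}\geq e^{-1/\varepsilon}\rho(\Delta_{\gamma^{1,+}})$: inside the outlet $D_1$ the curve $\gamma^{1,+}$ can be taken to run at distance comparable to $g(z^{(1)}_1)$ from the walls, which gives $\rho(\Delta_{\gamma^{1,+}})\sim g(z^{(1)}_1)$ and hence $\Delta_{\partial\Omega^+\setminus\Lambda^{1,+}}\geq c\,e^{-1/\varepsilon}g(z^{(1)}_1)$ on the support inside $D_1$. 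Inserting this into \eqref{bdm2} and absorbing the factor $\varepsilon\,e^{1/\varepsilon}$ into a constant $c(\varepsilon)$ produces \eqref{bdm3}; the uniform bound \eqref{bdm333} follows the same way using that $\rho$ is globally bounded below on $\supp\,{\bf b}_+^{(out)}$.

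The main technical obstacle I expect lies in the flux step: one must verify that the trace of $\xi_+$ on $\Lambda^{1,+}$ really attains the value $1$ at the appropriate endpoint, which requires a careful analysis of how the regularized distances $\Delta_{\gamma^{1,+}}$ and $\Delta_{\partial\Omega^+\setminus\Lambda^{1,+}}$ behave as one approaches the junction of $\Lambda^{1,+}$ with $\Gamma_0^{1,+}\setminus\Lambda^{1,+}$; one also needs the chosen geometry of $\gamma^{1,+}$ inside $D_1$ to guarantee $\rho(\Delta_{\gamma^{1,+}})\sim g(z^{(1)}_1)$, which is a selection step embedded in the construction. Once these trace and geometry facts are secured, the remaining verifications are routine calculus built on Lemma~\ref{xl}.
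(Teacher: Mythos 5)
Your overall route coincides with the paper's: solenoidality, smoothness and the support description are read off from the stream--function form of ${\bf b}_{+}^{(out)}$ together with Lemma~\ref{xl}; the flux is computed as the increment of $\tilde{\xi}_+$ between a point below $\gamma^{1,+}$ (where $\tilde{\xi}_+=0$) and a point near $\partial\Omega^+\setminus\Lambda^{1,+}$ (where $\xi_+=1$) --- you evaluate this on $\Lambda^{1,+}$ itself, whereas the paper first transfers the flux to a cross-section $\sigma_1(R)$ by the divergence theorem and integrates $\partial\tilde{\xi}_+/\partial x_2$ there, an equivalent variant; and \eqref{bdm2} is exactly Lemma~\ref{xl} combined with \eqref{rd}. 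The one step that does not hold up as written is your derivation of \eqref{bdm3}.

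You assert that because $\gamma^{1,+}$ runs at distance comparable to $g(z^{(1)}_1)$ from the walls of $D_1$, one has $\rho(\Delta_{\gamma^{1,+}}(x))\sim g(z^{(1)}_1)$ on ${\rm supp}\,{\bf b}_{+}^{(out)}\cap D_1$, and you then feed this into the lower half of the support condition. But $\Delta_{\gamma^{1,+}}(x)$ is the (regularized) distance from the \emph{point} $x$ to the curve: it tends to $0$ as $x$ approaches $\gamma^{1,+}$ no matter where the curve is placed, and for such $x$ the cut-off $\rho(\Delta_{\gamma^{1,+}})$ collapses to the constant $a_1d_0/2$, so the inequality $\Delta_{\partial\Omega^+\setminus\Lambda^{1,+}}\geq e^{-1/\varepsilon}\rho(\Delta_{\gamma^{1,+}})$ only yields a bound independent of $g$, which is useless when $g\to\infty$. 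What rescues the claim is the \emph{upper} half of the support condition, which you never invoke: on the support $\Delta_{\partial\Omega^+\setminus\Lambda^{1,+}}\leq \rho(\Delta_{\gamma^{1,+}})$, hence $d_{\partial\Omega^+\setminus\Lambda^{1,+}}\leq C\,d_{\gamma^{1,+}}$ there, and since in $D_1$ one has $d_{\partial\Omega^+\setminus\Lambda^{1,+}}\geq c\big(g(z^{(1)}_1)-d_{\gamma^{1,+}}\big)$, these two facts together force $d_{\gamma^{1,+}}\geq c'\,g(z^{(1)}_1)$ on the support, i.e.\ the support is pushed away from the curve at scale $g$. Only after this exclusion does the lower half of the support condition give $d_{\partial\Omega^+\setminus\Lambda^{1,+}}\geq c(\varepsilon)\,g(z^{(1)}_1)$ and hence \eqref{bdm3} from \eqref{bdm2}. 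This two-sided pinching ($c(\varepsilon)\,d_{\gamma^{1,+}}\leq d_{\partial\Omega^+\setminus\Lambda^{1,+}}\leq C\,d_{\gamma^{1,+}}$ on the support, followed by elimination of $d_{\gamma^{1,+}}$) is precisely what the paper's proof supplies and what is missing from yours; your argument for \eqref{bdm333} via the global lower bound $\rho\geq a_1d_0/2$ is fine.
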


\begin{proof}
The first statement of the lemma follows from the definition \eqref{vd} and from the Lemma\,\ref{xl}. 
Due to the properties of the function $\tilde{\xi}_+,$ we have
\begin{equation*}\begin{array}{l}
\int\limits_{\Lambda^{1,+}}{\bf b}_{+}^{(out)}\cdot {\bf n}\,dS=-\int\limits_{\sigma_1(R)}{\bf b}_{+}^{(out)}\cdot {\bf n}\,dS=\dfrac{\mathbb{F}^{(inn)}+\mathbb{F}_1^{(out)}}{2}\,\int\limits_{0}^{g_1(R)}\dfrac{\partial\tilde{\xi}_+}{\partial x_2}dx_2\\
\\
=\dfrac{\mathbb{F}_1^{(inn)}+\mathbb{F}_1^{(out)}}{2}\,\big(\tilde{\xi}(z^{(1)}_1,g(R),\varepsilon)-\tilde{\xi}(z^{(1)}_1,0,\varepsilon)\big)
=\dfrac{\mathbb{F}^{(inn)}+\mathbb{F}_1^{(out)}}{2}.
\end{array}
\end{equation*}
Using the inequality \eqref{xil} and the definition  \eqref{vd}, we derive the inequality \eqref{bdm2}:
\begin{equation*}\begin{array}{l}
|{\bf b}_{+}^{(out)}|\leq |\mathbb{F}^{(inn)}+\mathbb{F}_1^{(out)}|\cdot \sqrt{\sum\limits_{i=1}^2\bigg(\dfrac{\partial\tilde{\xi}_+}{\partial x_i}\bigg)^2}
\leq \dfrac{c\,\varepsilon |\mathbb{F}^{(inn)}+\mathbb{F}_1^{(out)}|}{\Delta_{\partial\Omega^+\setminus\Lambda^{1,+}}}
\leq \dfrac{c\,\varepsilon |\mathbb{F}^{(inn)}+\mathbb{F}_1^{(out)}|}{d_{\partial\Omega^+\setminus\Lambda^{1,+}}},\\
\\
|\nabla{\bf b}_{+}^{(out)}|\leq |\mathbb{F}^{(inn)}+\mathbb{F}_1^{(out)}|\cdot \sqrt{\sum\limits_{\substack{
   i,j=1 \\
   i\neq j
  }}^2\bigg(\dfrac{\partial^2\tilde{\xi}_+}{\partial x_i\partial x_j}\bigg)^2}
\leq \dfrac{c\,\varepsilon |\mathbb{F}^{(inn)}+\mathbb{F}_1^{(out)}|}{\Delta^2_{\partial\Omega^+\setminus\Lambda^{1,+}}}\\
\\
\leq \dfrac{c\,\varepsilon |\mathbb{F}^{(inn)}+\mathbb{F}_1^{(out)}|}{d^2_{\partial\Omega^+\setminus\Lambda^{1,+}}}.
\end{array}\end{equation*}
\\It remains to prove the estimates \eqref{bdm3}, \eqref{bdm333}. Since for the points $x\in {\rm supp}\, {\bf b}_{1,+}^{(out)}$ we have that $e^{-1/\varepsilon}\rho(\Delta_{\gamma^{1,+}})\leq\Delta_{\partial\Omega\setminus\Lambda^{1,+}}\leq \rho(\Delta_{\gamma^{1,+}})$ and $d_{\gamma^{1,+}}\geq\dfrac{d_0}{2},$ using the properties of the regularized distance (see the estimate \eqref{rd}) we get
\begin{equation*}\label{1}\begin{array}{l}
\dfrac{1}{a_2} e^{-1/\varepsilon}\rho(a_1\,d_{\gamma^{1,+}})\leq d_{\partial\Omega^+\setminus\Lambda^{1,+}}\leq \dfrac{1}{a_1}\rho(a_2\,d_{\gamma^{1,+}}).
\end{array}\end{equation*} 
Since $d_{\gamma^{1,+}}\geq\dfrac{d_0}{2},$ then $a_2\,d_{\gamma^{1,+}}\geq a_2\dfrac{d_0}{2}$ and \begin{equation*}\label{2}\begin{array}{l}\rho(a_2\,d_{\gamma^{1,+}})\leq \rho(2a_2\,d_{\gamma^{1,+}})=2a_2\,d_{\gamma^{1,+}}.\end{array}\end{equation*}
If $d_{\gamma^{1,+}}\geq\dfrac{a_2}{a_1}d_0,$ then $\rho(a_1\,d_{\gamma^{1,+}})=a_1\,d_{\gamma^{1,+}},$ while if $d_{\gamma^{1,+}}\leq\dfrac{a_2}{a_1}d_0,$ then
\begin{equation*}\begin{array}{l}
\rho(a_1\,d_{\gamma^{1,+}})\geq \rho(\dfrac{a_1\,d_{\gamma^{1,+}}}{2})=\dfrac{a_1\,d_0}{2}=\dfrac{a_1^2}{2a_2^2}\cdot\dfrac{a_2\,d_0}{a_1}\geq\dfrac{a_1^2}{2a_2^2}d_{\gamma^{1,+}}.
\end{array}\end{equation*}
Therefore, we have
\begin{equation*}\label{1}\begin{array}{l}
\dfrac{a_1^2\,e^{-1/\varepsilon}}{a_2^2} d_{\gamma^{1,+}}\leq\dfrac{1}{a_2} e^{-1/\varepsilon}\rho(a_1\,d_{\gamma^{1,+}})\leq d_{\partial\Omega^+\setminus\Lambda^{1,+}}\leq \dfrac{1}{a_1}\rho(a_2\,d_{\gamma^{1,+}})\leq\dfrac{2a_2}{a_1}d_{\gamma^{1,+}}.
\end{array}\end{equation*} 
\\
Since for $x\in D_1$ we have that $d_{\gamma^{1,+}}=|z^{(1)}_2-\tilde{c}|,$ where $\tilde{c}$ is the constant distance from the curve $\gamma^{1,+}$ to the $z^{(1)}_1$ axis, and $d_{\partial\Omega^+\setminus\Lambda^{1,+}}\geq c\Big(g(z^{(1)}_1)-|z^{(1)}_2-\tilde{c}|\Big)$ we obtain that
\begin{equation*}\begin{array}{l}
c\,g(z^{(1)}_1)\leq|z^{(1)}_2-\tilde{c}|\leq g(z^{(1)}_1),
\end{array}
\end{equation*}
i.e.,
\begin{equation*}\begin{array}{l}
c_1\,g(z^{(1)}_1)\leq d_{\partial\Omega^+\setminus\Lambda^{1,+}}\leq c_2\,g(z^{(1)}_1), \ \ 
x\in {\rm supp}\,{\bf b}_{+}^{(out)},
\end{array}
\end{equation*}
where $c_1$ and $c_2$ are positive constants.
\\Finally, estimates \eqref{bdm3}, \eqref{bdm333} follow from the last inequalities and from \eqref{bdm2}. 
\end{proof}

\begin{lem}\label{lfb}
For any solenoidal vector field ${\bf w}\in W^{1,2}_{loc}(\overline{\Omega^+})$ with ${\bf w}\big|_{\partial\Omega^+}=0$ the following inequalities 
\begin{equation}\label{lfbe}\begin{array}{l}
\big|\int\limits_{\Omega^+_{k+1}}({\bf w}\cdot\nabla){\bf w}\cdot{\bf b}_{+}^{(out)}\,dx\big|\leq c\,\varepsilon |\mathbb{F}^{(inn)}+\mathbb{F}_1^{(out)}|\int\limits_{\Omega^+_{k+1}}|\nabla {\bf w}|^2\,dx,\\
\\
\big|\int\limits_{\Omega^+_{k+1}\setminus\Omega^+_k}({\bf w}\cdot\nabla){\bf w}\cdot{\bf b}_{+}^{(out)}\,dx\big|\leq c\,\varepsilon |\mathbb{F}^{(inn)}+\mathbb{F}_1^{(out)}|\int\limits_{\Omega^+_{k+1}\setminus\Omega^+_k}|\nabla {\bf w}|^2\,dx
\end{array}
\end{equation}
hold, where $\Omega^+_k=\{x\in\Omega_k: x_2>0 \}$ and the constant $c$ does not depend on $\varepsilon$ and $k.$
\end{lem}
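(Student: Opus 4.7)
I would deduce both inequalities in \eqref{lfbe} from a pointwise bound on $\mathbf{b}_+^{(out)}$ combined with Cauchy--Schwarz and a weighted Hardy-type inequality. Starting from the pointwise estimate \eqref{bdm2}, one has
\[
\Bigl|\int\limits_{\Omega^+_{k+1}}(\mathbf{w}\cdot\nabla)\mathbf{w}\cdot\mathbf{b}_+^{(out)}\,dx\Bigr|
\le c\,\varepsilon\,|\mathbb{F}^{(inn)}+\mathbb{F}_1^{(out)}|\int\limits_{\Omega^+_{k+1}}\dfrac{|\mathbf{w}|\,|\nabla\mathbf{w}|}{d_{\partial\Omega^+\setminus\Lambda^{1,+}}(x)}\,dx.
\]
Applying the Cauchy--Schwarz inequality to the right-hand side reduces the whole lemma to establishing
\[
\int\limits_{\Omega^+_{k+1}}\dfrac{|\mathbf{w}|^2}{d^2_{\partial\Omega^+\setminus\Lambda^{1,+}}(x)}\,dx\le C\int\limits_{\Omega^+_{k+1}}|\nabla\mathbf{w}|^2\,dx,
\]
with $C$ independent of $k$, together with its obvious analogue over $\Omega^+_{k+1}\setminus\Omega^+_k$.

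To obtain this uniform weighted inequality I would split $\Omega^+_{k+1}=\Omega_0^+\cup(D_1^+\cap\Omega^+_{k+1})$. On the fixed bounded piece $\Omega_0^+$ the estimate follows from Lemma~\ref{hardy} applied with $\mathcal{L}=(\partial\Omega^+\setminus\Lambda^{1,+})\cap\partial\Omega_0^+$, a set of positive measure on which $\mathbf{w}$ vanishes, together with the observation that $d_{\partial\Omega^+\setminus\Lambda^{1,+}}$ and $d_{\mathcal L}$ are comparable on $\Omega_0^+$ by the Lipschitz regularity of $\partial\Omega$; the resulting constant depends only on $\Omega_0^+$. Inside the outlet $D_1^+$, the essential point is that $\mathbf{w}$ vanishes \emph{both} on the upper wall $z_2^{(1)}=g_1(z_1^{(1)})$ and on the $x_1$-axis $z_2^{(1)}=0$, because both pieces lie in $\partial\Omega^+$. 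A one-dimensional Hardy inequality applied on each vertical cross-section therefore yields, for every fixed $z_1^{(1)}$,
\[
\int_0^{g_1(z_1^{(1)})}\dfrac{|\mathbf{w}|^2}{\min(z_2^{(1)},g_1(z_1^{(1)})-z_2^{(1)})^2}\,dz_2^{(1)}\le C\int_0^{g_1(z_1^{(1)})}|\nabla\mathbf{w}|^2\,dz_2^{(1)}
\]
with a universal $C$. Integrating in $z_1^{(1)}$ and using $d_{\partial\Omega^+\setminus\Lambda^{1,+}}(x)\asymp\min(z_2^{(1)},g_1(z_1^{(1)})-z_2^{(1)})$ deep in $D_1^+$, a consequence of the Lipschitz bound $L_1$ for $g_1$, produces the outlet contribution with a constant depending only on $L_1$. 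Combining the two pieces gives the desired uniform bound. The localized version over $\Omega^+_{k+1}\setminus\Omega^+_k$ follows by the same fibrewise argument and never requires boundary data on $\sigma_1(R_{1,k})$ or $\sigma_1(R_{1,k+1})$.

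The hard part is precisely the $k$-independence of $C$, and it is exactly where the symmetry hypothesis enters. Since $\{x_2=0\}\cap\overline{\Omega}\subset\partial\Omega^+$, the field $\mathbf{w}$ carries homogeneous Dirichlet data on \emph{both} sides of every outlet cross-section, which keeps the one-dimensional Hardy constant bounded even as $g_1(z_1^{(1)})$ grows with $z_1^{(1)}$ along $D_1$. Without symmetry the cross-sectional data would be one-sided and the constant would deteriorate with the outlet width, destroying the uniformity in $k$ needed to close the Leray--Schauder argument of Section~3.2.
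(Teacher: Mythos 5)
Your argument is essentially the paper's own proof: the authors likewise apply the Cauchy--Schwarz inequality, insert the pointwise bound \eqref{bdm2} on ${\bf b}_{+}^{(out)}$, and close with the Hardy inequality \eqref{hardy1}; your fibrewise justification of the $k$-independent Hardy constant in the outlet is a welcome elaboration of a step the paper leaves implicit by simply citing Lemma~\ref{hardy}. One caveat on your closing remarks: in the $\mathbb{V}$-type geometry the outlet $D_1$ is not self-symmetric, so its centerline $z_2^{(1)}=0$ is not the $x_1$-axis and is interior to $\Omega^+$; the two-sided vanishing of ${\bf w}$ on each cross-section comes from both walls $z_2^{(1)}=\pm g_1(z_1^{(1)})$ lying in $\partial\Omega\subset\partial\Omega^+$ rather than from the symmetry axis, so the uniformity of the Hardy constant does not hinge on symmetry here (symmetry is needed elsewhere, e.g.\ to extend ${\bf b}_{+}^{(out)}$ from $\Omega^+$ to $\Omega$), though this does not affect the validity of your proof of the stated lemma.
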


\begin{proof}
Applying the H{\"o}lder inequality and the estimates \eqref{bdm2}, \eqref{hardy1}, we obtain:
\begin{equation*}\begin{array}{l}
\big|\int\limits_{\Omega^+_{k+1}}({\bf w}\cdot\nabla){\bf w}\cdot{\bf b}_{+}^{(out)}\,dx\big|\leq \bigg(\int\limits_{\Omega^+_{k+1}}|\nabla{\bf w}|^2\,dx\bigg)^{1/2}\cdot\bigg(\int\limits_{\Omega^+_{k+1}}|{\bf w}\cdot {\bf b}_{+}^{(out)}|^2\,dx\bigg)^{1/2}\\
\\
\leq\bigg(\int\limits_{\Omega^+_{k+1}}|\nabla{\bf w}|^2dx\bigg)^{1/2}\!\!\cdot\bigg(\int\limits_{\Omega^+_{k+1}}|{\bf w}|^2\!\cdot \bigg(\dfrac{c\,\varepsilon\,|\mathbb{F}^{(inn)}+\mathbb{F}_1^{(out)}|}{d_{\partial\Omega^+\setminus\Lambda^{1,+}}}\bigg)^2dx\bigg)^{1/2}\\
\\
\leq c\,\varepsilon\,|\mathbb{F}^{(inn)}+\mathbb{F}_1^{(out)}|\int\limits_{\Omega^+_{k+1}}|\nabla{\bf w}|^2\,dx.
\end{array}
\end{equation*}
The same argument proves the second inequality of \eqref{lfbe}.
\end{proof}
Notice that Lemma\,\ref{lfb} is valid also for the domains $\Omega^-_{k+1}$ and $\Omega^-_{k+1}\setminus\Omega^-_{k}.$ Let us extend the vector field ${\bf b}_{+}^{(out)}$ into the domain $\Omega^-$ and define:
\begin{equation*}
\begin{array}{l}
{\bf b}_{1}^{(out)}(x,\varepsilon)=\begin{cases}
\big(b^{(out)}_{+,1}(x_1,x_2,\varepsilon), \ b^{(out)}_{+,2}(x_1,x_2,\varepsilon)\big), \ \ \ \ \  x\in\Omega^+,\\
\\
\big(b^{(out)}_{+,1}(x_1,-x_2,\varepsilon), \ -b^{(out)}_{+,2}(x_1,-x_2,\varepsilon)\big), \ \ \ \ \  x\in\Omega^-.
\end{cases}
\end{array}
\end{equation*}
The vector field ${\bf b}_{1}^{(out)}$ is symmetric, solenoidal, satisfies the Leray--Hopf inequality and
\begin{equation*}\begin{array}{l}
\int\limits_{\Lambda^{1}}{\bf b}_{1}^{(out)}\cdot{\bf n}\,dS=\mathbb{F}^{(inn)}+\mathbb{F}_1^{(out)}.
\end{array}
\end{equation*}
\\Let ${\bf h}^{(out)} (x,\varepsilon)=\big({\bf a} (x)-{\bf b}^{(inn)}(x)-{\bf b}_{1}^{(out)} (x,\varepsilon)\big)\big|_{\Gamma_0^1 \cap \Omega_0}.$
Then we have 
\begin{equation}\label{0}\begin{array}{l}
\int\limits_{\Gamma_0^1 \cap \Omega_0}{\bf h}^{(out)}\cdot{\bf n}\,dS=0.
\end{array}
\end{equation}
\\If ${\bf a}\in W^{1/2,2}(\partial\Omega),$ then ${\bf h}^{(out)}\in W^{1/2,2}(\partial\Omega)$ and 
\begin{equation*}\begin{array}{rcl}
\|{\bf h}^{(out)}\|_{W^{1/2,2}(\partial\Omega)} & \leq & c\big(\|{\bf a}\|_{W^{1/2,2}(\partial\Omega)}+\|{\bf b}_{1}^{(out)}\|_{W^{1/2,2}(\partial\Omega)}\big)\\
\\
& \leq & c\big(\|{\bf a}\|_{W^{1/2,2}(\partial\Omega)}+|\mathbb{F}^{(inn)}+\mathbb{F}_1^{(out)}|\big) \leq c\|{\bf a}\|_{W^{1/2,2}(\partial\Omega)}.
\end{array}\end{equation*}
Because of the condition \eqref{0} there exists (see Lemma\,\ref{extension}) an extension ${\bf b}_0^{(out)}$ of the function ${\bf h}^{(out)}$ such that ${\rm supp}\,{\bf b}_0^{(out)}(x,\varepsilon)$ is contained in a small neighborhood of $\Gamma^1_0 \cap \Omega_0$
\begin{equation}\label{b0}\begin{array}{l}
{\rm div}\,{\bf b}_0^{(out)}=0, \ \ {\bf b}_0^{(out)}(x,\varepsilon)|_{\Gamma^1_0 \cap \Omega_0}={\bf h}^{(out)}(x,\varepsilon).
\end{array}
\end{equation}
Moreover, ${\bf b}_0^{(out)}$ satisfies the Leray--Hopf inequality.
\\Notice that the vector field ${\bf b}^{(out)}_0$ is not necessary symmetric. However, since the boundary value ${\bf h}^{(out)}$ is symmetric, ${\bf b}^{(out)}_0$ can be symmetrized to $\tilde{{\bf b}}^{(out)}_0=(\tilde{b}^{(out)}_{0,1},\tilde{b}^{(out)}_{0,2})$  as in \eqref{symm}.
Then set
\begin{equation}\label{b}\begin{array}{l}
{\bf B}_{1}^{(out)} (x,\varepsilon)={\bf b}_{1}^{(out)} (x,\varepsilon)+\tilde{{\bf b}}^{(out)}_0 (x,\varepsilon).
\end{array}
\end{equation}
The following lemma is a direct corollary of the previous lemmas.
\begin{lem}\label{out}
The vector field ${\bf B}_{1}^{(out)}\in W^{1,2}_{loc}(\Omega)$ is symmetric and solenoidal, ${\bf B}_{1}^{(out)}\big|_{\Gamma^1_0 \cap \Omega_0}={\bf a}\big|_{\Gamma^1_0 \cap \Omega_0}-{\bf b}^{(inn)}\big|_{\Gamma_0^1 \cap \Omega_0},$  ${\bf B}_{1}^{(out)}\big|_{\partial\Omega\setminus(\Gamma^1_0 \cap \Omega_0)}=0.$ Further, for any solenoidal symmetric vector field ${\bf w}\in W^{1,2}_{loc}(\overline{\Omega})$ with ${\bf w}\big|_{\partial\Omega}=0$ the following inequalities
\begin{equation}\label{oute}\begin{array}{l}
\big|\int\limits_{\Omega_{k+1}}({\bf w}\cdot\nabla){\bf w}\cdot{\bf B}_{1}^{(out)}\,dx\big|\leq c\,\varepsilon |\mathbb{F}^{(inn)}+\mathbb{F}_1^{(out)}|\int\limits_{\Omega_{k+1}}|\nabla {\bf w}|^2\,dx,\\
\\
\big|\int\limits_{\Omega_{k+1}\setminus\Omega_k}({\bf w}\cdot\nabla){\bf w}\cdot{\bf B}_{1}^{(out)}\,dx\big|\leq c\,\varepsilon |\mathbb{F}^{(inn)}+\mathbb{F}_1^{(out)}|\int\limits_{\Omega_{k+1}\setminus\Omega_k}|\nabla {\bf w}|^2\,dx
\end{array}
\end{equation}
hold. The constant $c$ does not depend on $\varepsilon$ and $k.$
Moreover,
\begin{equation}\label{outg}\begin{array}{l}
|{\bf B}_{1}^{(out)}(x,\varepsilon)|\leq \dfrac{C(\varepsilon)|\mathbb{F}^{(inn)}+\mathbb{F}_1^{(out)}|}{g_j(z^{(j)}_1)}, \ \ \ x\in D_j, \ \ j=1,2,
\\
\\
|\nabla{\bf B}_{1}^{(out)}(x,\varepsilon)|\leq \dfrac{C(\varepsilon)|\mathbb{F}^{(inn)}+\mathbb{F}_1^{(out)}|}{g_j^2(z^{(j)}_1)}, \ \ \ x\in D_j, \ \ j=1,2,\\
\\
|{\bf B}_{1}^{(out)}(x,\varepsilon)|+|\nabla{\bf B}_{1}^{(out)}(x,\varepsilon)|\leq 
C(\varepsilon)|\mathbb{F}^{(inn)}+\mathbb{F}_1^{(out)}|, \ \ \ x\in {\rm supp}\,{\bf B}_{1}^{(out)} .
\end{array}
\end{equation}
\end{lem}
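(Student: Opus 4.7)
The plan is to read off the conclusion directly from the already established properties of the two constituents of ${\bf B}_1^{(out)} = {\bf b}_1^{(out)} + \tilde{{\bf b}}_0^{(out)}$. The symmetry and solenoidality of ${\bf B}_1^{(out)}$ follow immediately: ${\bf b}_1^{(out)}$ is symmetric by its construction as the reflection of ${\bf b}_+^{(out)}$ across the $x_1$-axis and solenoidal because ${\bf b}_+^{(out)}$ was shown to be solenoidal in the previous lemma; $\tilde{{\bf b}}_0^{(out)}$ is symmetric by the definition \eqref{symm} and preserves the divergence-free property of ${\bf b}_0^{(out)}$ because the averaging in \eqref{symm} is compatible with $\mathrm{div}$.

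Next I would verify the boundary conditions. Since $\mathrm{supp}\,{\bf b}_1^{(out)}$ lies strictly away from the inner boundary and from $\Gamma_0^2$ (it sits in the subset of $\Omega^+\cup\Omega^-$ described in the previous lemma), the vector field ${\bf b}_1^{(out)}$ vanishes on $\partial\Omega\setminus(\Gamma_0^1\cap\Omega_0)$ and provides the flux part $\mathbb{F}^{(inn)}+\mathbb{F}_1^{(out)}$ through $\Lambda^1$. The trace is then completed by $\tilde{{\bf b}}_0^{(out)}\big|_{\Gamma_0^1\cap\Omega_0}={\bf h}^{(out)}={\bf a}-{\bf b}^{(inn)}-{\bf b}_1^{(out)}$ on $\Gamma_0^1\cap\Omega_0$ and zero elsewhere on $\partial\Omega$ (this is guaranteed by Lemma \ref{extension}, together with the symmetrization \eqref{symm}, which is consistent because ${\bf h}^{(out)}$ is itself symmetric). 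Summing the two traces gives exactly ${\bf a}-{\bf b}^{(inn)}$ on $\Gamma_0^1\cap\Omega_0$ and $0$ on the rest of $\partial\Omega$.

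The Leray--Hopf inequalities \eqref{oute} follow by treating the two summands separately. For ${\bf b}_1^{(out)}$, Lemma \ref{lfb} gives the bound on $\Omega_{k+1}^+$; the analogous bound on $\Omega_{k+1}^-$ is obtained by the change of variables $x_2\mapsto -x_2$, which is an isometry on $W^{1,2}$ and leaves the integrand in absolute value invariant, and the two bounds add to produce the estimate on $\Omega_{k+1}$; the same argument on the annular shells $\Omega_{k+1}^\pm\setminus\Omega_k^\pm$ yields the second inequality. For $\tilde{{\bf b}}_0^{(out)}$, the standard Hopf cut-off construction of Lemma \ref{extension} supplies its Leray--Hopf inequality directly; since $\mathrm{supp}\,\tilde{{\bf b}}_0^{(out)}$ lies in a fixed neighborhood of $\Gamma_0^1\cap\Omega_0$ inside $\Omega_0$, it does not affect the $\Omega_{k+1}\setminus\Omega_k$ part for $k\geq 1$ and can be absorbed into the constant $c$. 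The factor $|\mathbb{F}^{(inn)}+\mathbb{F}_1^{(out)}|$ for the ${\bf b}_0^{(out)}$ piece comes from the bound $\|{\bf h}^{(out)}\|_{W^{1/2,2}(\partial\Omega)}\leq c\|{\bf a}\|_{W^{1/2,2}(\partial\Omega)}$ established just before \eqref{b0}.

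Finally, the pointwise estimates \eqref{outg} in each outlet $D_j$ follow directly from \eqref{bdm3}, \eqref{bdm333}, applied to ${\bf b}_+^{(out)}$ in $D_1$ and to its reflection in $D_2$, because $\tilde{{\bf b}}_0^{(out)}$ vanishes identically in $D_1\cup D_2$ (its support lies in $\Omega_0$). The only real obstacle is careful bookkeeping: one has to track the supports of the three constituents to confirm that the boundary traces, the cancellation of the interior flux contribution, and the decay in each outlet separate cleanly, and one has to verify that the symmetrization \eqref{symm} preserves both the Leray--Hopf bound (by averaging the inequality applied to $b_{0,i}^{(out)}(x_1,\pm x_2,\varepsilon)$) and the pointwise bounds. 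Once these observations are recorded, the lemma follows as the stated direct corollary of Lemmas \ref{lfb} and \ref{extension}.
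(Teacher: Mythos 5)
Your proposal is correct and follows the same route the paper intends: the paper states this lemma without proof as ``a direct corollary of the previous lemmas,'' and your argument simply assembles Lemma~\ref{lfb} (extended to $\Omega^-$ by the symmetric reflection), the pointwise bounds \eqref{bdm2}--\eqref{bdm333}, and the Hopf-type extension of Lemma~\ref{extension}, exactly as the decomposition ${\bf B}_1^{(out)}={\bf b}_1^{(out)}+\tilde{{\bf b}}_0^{(out)}$ demands. The only caveat --- one shared with the paper's own statement --- is that the factor $|\mathbb{F}^{(inn)}+\mathbb{F}_1^{(out)}|$ in \eqref{oute} does not literally come out of the $\tilde{{\bf b}}_0^{(out)}$ contribution, whose Leray--Hopf constant depends on $\|{\bf a}\|_{W^{1/2,2}(\partial\Omega)}$ rather than on that flux, so the constant must be understood as absorbing the dependence on ${\bf a}$ (as it does in Theorem~3.1).
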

\begin{remark}\label{outr}
In the same manner we can construct the extension ${\bf B}_2^{(out)}$ and the previous lemma remains valid for this extension with very slight differences. Firstly, ${\bf B}_{2}^{(out)}\big|_{\Gamma^2_0 \cap \Omega_0}={\bf a}\big|_{\Gamma^2_0 \cap \Omega_0}.$ Secondly, the estimates \eqref{oute} and \eqref{outg} holds for ${\bf B}_2^{(out)}$ if instead of $|\mathbb{F}^{(inn)}+\mathbb{F}_1^{(out)}|$ we take $|\mathbb{F}_2^{(out)}|.$
\end{remark}
According to Lemma\,\ref{out} and Remark\,\ref{outr} the constructed extension ${\bf B}^{(out)}$ has form \eqref{B}. Therefore, the vector field
$$\begin{array}{l}
{\bf A}={\bf B}^{(inn)}+{\bf B}^{(out)}
\end{array}$$
has all the necessary properties that insure the validity of the Theorem\,3.1.

\subsection{$\mathbb{Y}$ Type Domain}

In this case we consider the domain $
\Omega = \Omega_0\cup D_1\cup D_2\cup D_3,
$ 
where $D_1$ is a self-symmetric outlet while $D_2,$ $D_3$ is a pair of symmetric outlets (see Fig.\,3). The outer boundary consists of three disjoint connected unbounded components $\Gamma_0^1,$ $\Gamma_0^2$ and $\Gamma_0^3.$ 
\\Since $\Lambda^2\subset\Gamma_0^2$ and $\Lambda^3\subset\Gamma_0^3$ are symmetric, we have 
$$\begin{array}{l}
\int\limits_{\Lambda^1} {\bf a }\cdot {\bf n} \, dS=\mathbb{F}_1^{(out)},
\int\limits_{\Lambda^2} {\bf a }\cdot {\bf n} \, dS=\dfrac{1}{2}\mathbb{F}_2^{(out)}, \ \ \int\limits_{\Lambda^3} {\bf a }\cdot {\bf n} \, dS=\dfrac{1}{2}\mathbb{F}_2^{(out)}.
\end{array}$$ 
As usual $\int\limits_{\Gamma_i}{\bf a}\cdot {\bf n}\,dS=\mathbb{F}_i^{(inn)}, \ \ i=1,...,I.$ 
\\For the correct formulation we have to prescribe the fluxes over each outlet having in mind that $D_2$ and $D_3$ is a pair of symmetric outlets, i.e.,
$$\begin{array}{l}\int\limits_{\sigma_1(R)} {\bf u}\cdot {\bf n}\,dS=\mathbb{F}_1, \ \ 
\int\limits_{\sigma_2(R)} {\bf u}\cdot {\bf n}\,dS=\dfrac{1}{2}\mathbb{F}_2, \ \ 
\int\limits_{\sigma_3(R)} {\bf u}\cdot {\bf n}\,dS=\dfrac{1}{2}\mathbb{F}_2, \ \ R>R_0>0.\end{array}$$
\\Then we have that the necessary compatibility condition \eqref{nc1} can be rewritten as
$$ \mathbb{F}^{(inn)}+\mathbb{F}^{(out)}+\mathbb{F}_1+\mathbb{F}_2=0,$$
where $$\mathbb{F}^{(inn)}=\sum\limits_{i=1}^I \mathbb{F}_i^{(inn)}, \ \ \mathbb{F}^{(out)}=\sum\limits_{m=1}^3 \mathbb{F}_m^{(out)}.$$
\begin{figure}[ht!]
\centering
\includegraphics[scale=0.35]{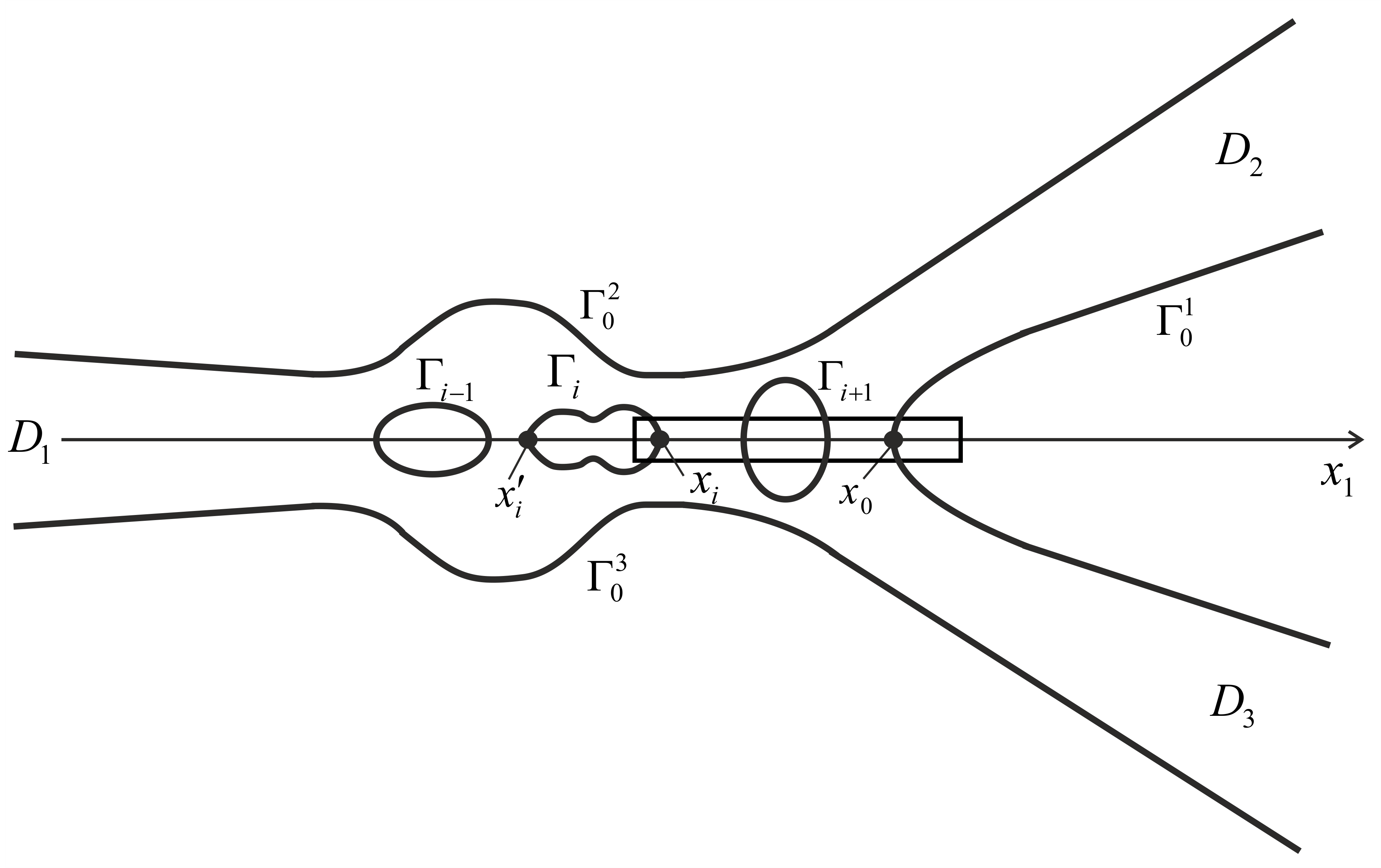}
\caption{Domain $\Omega$}
\end{figure}

\subsubsection{Construction of the Extension}

The extension ${\bf B}^{(inn)}$ is constructed in the same way as in the subsection\,4.1.1. We remove the fluxes from the inner boundaries $\Gamma_i, \ \ i=1,...,I,$ to the outer boundary $\Gamma_0^1.$ Next step is to construct the extensions from the outer boundaries. More precisely, we construct ${\bf B}_1^{(out)}$ which extends the modified\footnote{Let us remind that ${\bf b}^{(inn)}$ is a virtual drain function which removes the fluxes from the inner boundaries $\Gamma_1,...,\Gamma_I$ to the outer boundary $\Gamma^1_0.$} boundary value ${\bf a}-{\bf b}^{(inn)}$ from the connected component $\Gamma_0^{1}$
and ${\bf B}_2^{(out)}$ which extends the origin boundary value ${\bf a}$ from the  connected symmetric components  $\Gamma_0^{2}$ and $\Gamma_0^{3}.$ Hence, the extension of the outer boundary $\partial\Omega\setminus\cup_{i=1}^I \Gamma_i$ has to be constructed as the sum
$${\bf B}^{(out)}={\bf B}_1^{(out)}+{\bf B}_2^{(out)}.$$
This construction is analogous to that of the subsection\,4.1.2.
\\The main difference for the formulation of the problem in $\mathbb{V}$ and $\mathbb{Y}$ type domains is that for the case of $\mathbb{Y}$ type domain we have to prescribe the fluxes over the cross sections of the outlets, i.e., we need to construct, in addition, the flux carrier ${\bf B}^{(flux)},$ which has zero boundary value over $\partial\Omega$ and removes the fluxes over the cross sections of the outlets to infinity. We start with the construction of the auxiliary function ${\bf b}^{(flux)}.$ Let us introduce an infinite simple curve $\gamma_+\subset\Omega^+,$ consisting of two semi-infinite lines $\gamma_{1}\subset D_{1},$ $\gamma_{3}\subset D_{3}$ and finite curve $\gamma_0\subset\Omega_0.$
Notice that $\gamma_+$ does not intersect the boundary $\partial\Omega$ and the direction of this curve coincides with the increase of the coordinate $z^{(1)}_1.$ Then we can define in $\Omega^+$ a cut-off function
\begin{equation*}\begin{array}{l}
\xi_+(x)=\Psi\Big(\varepsilon\,\ln\dfrac{\rho(\Delta_{\gamma_+}(x))}{\Delta_{\partial\Omega^+}(x)}\Big),
\end{array}\end{equation*}
where functions $\rho$ and $\Psi$ are defined by \eqref{ro} and \eqref{fi}, respectively.
\\Since curve $\gamma_+$ decomposes $\Omega^+$ into two parts situated above and below $\gamma_+,$ we define
$\tilde{\xi}_+(x,\varepsilon)=\xi_+(x,\varepsilon)$ above  $\gamma_+$ and $\tilde{\xi}_+(x,\varepsilon)=0$ below  $\gamma_+.$
Thus, we introduce 
\begin{equation*}\begin{array}{l}
{\bf b}_+(x)=\Big(\dfrac{\partial\tilde{\xi}_+(x)}{\partial x_2},\,-\dfrac{\partial\tilde{\xi}_+(x)}{\partial x_1} \Big),  \ \ \ x\in\Omega^+.
\end{array}\end{equation*}
\begin{lem}\label{bl}
The vector field ${\bf b}_+$ is solenoidal, vanishes on the boundary $\partial\Omega^+$ and satisfies
\begin{equation}\label{b1}\begin{array}{l}
\int\limits_{\sigma_2(R)}{\bf b}_+\cdot {\bf n}\,dS=1, \ \ \ \ \int\limits_{\sigma_1(R)\cap\Omega^+}{\bf b}_+\cdot {\bf n}\,dS=-1, \ \ \ R>R_0.
\end{array}\end{equation}
\\For any solenoidal vector field ${\bf w}\in W_{loc}^{1,2}(\overline{\Omega^+})$ with ${\bf w}\big|_{\partial\Omega^+}=0$ the following Leray-Hopf inequality
\begin{equation}\label{b2}\begin{array}{l}
\big|\int\limits_{\Omega^+_{k+1}}({\bf w}\cdot\nabla)\,{\bf w}\cdot {\bf b}_+\,dx\big|\leq c\,\varepsilon\int\limits_{\Omega^+_{k+1}}|{\bf w}|^2dx,\\
\\
\big|\int\limits_{\Omega^+_{k+1}\setminus\Omega^+_k}({\bf w}\cdot\nabla)\,{\bf w}\cdot {\bf b}_+\,dx\big|\leq c\,\varepsilon\int\limits_{\Omega^+_{k+1}\setminus\Omega^+_k}|{\bf w}|^2dx
\end{array}\end{equation}
hold, where the constant $c$ does not depend on $\varepsilon$ and 
$k.$ Moreover,
\begin{equation}\label{b3}\begin{array}{l}
\big|{\bf b}_+(x)\big|\leq\dfrac{C(\varepsilon)}{g^2_{i}(z_1^{(i)})}, \ \ \ \big|\nabla{\bf b}_+(x)\big|\leq\dfrac{C(\varepsilon)}{g^3_i(z_1^{(i)})},\ \ \ \ x\in D_1\cap\Omega^+, \ D_2, \ \ i=1,2,\\
\\
\big|{\bf b}_+(x)\big|+\big|\nabla{\bf b}_+(x)\big|\leq C(\varepsilon), \ \ \ x\in {\rm supp}\,{\bf b}_+
\end{array}\end{equation}
hold.
\end{lem}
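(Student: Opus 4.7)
The proof will closely parallel the analysis already carried out for $\mathbf{b}_+^{(out)}$ in Subsection 4.1.2, so the plan is to verify the four claims in the order they are stated, importing the estimates for $\xi_+$ and $\tilde{\xi}_+$ (these obey exactly the bounds of Lemma \ref{xl} with $\partial\Omega^+\setminus\Lambda^{1,+}$ replaced by $\partial\Omega^+$, since the construction of $\xi_+$ via $\Psi$, $\rho$, $\Delta_{\gamma_+}$ and $\Delta_{\partial\Omega^+}$ is formally identical). First, solenoidality is automatic: $\div\mathbf{b}_+=\partial_{x_1}\partial_{x_2}\tilde{\xi}_+-\partial_{x_2}\partial_{x_1}\tilde{\xi}_+=0$. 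The vanishing on $\partial\Omega^+$ follows since in a neighborhood of $\partial\Omega^+$ one has $\Delta_{\partial\Omega^+}\le e^{-1/\varepsilon}\rho(\Delta_{\gamma_+})$, so $\xi_+\equiv 1$ there and hence $\nabla\xi_+=0$; below $\gamma_+$, $\tilde{\xi}_+\equiv 0$. Combined, $\mathbf{b}_+|_{\partial\Omega^+}=0$.

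Next, for the flux identities \eqref{b1}, I would pass to the local coordinates $z^{(i)}$ in each outlet. On a cross-section $\sigma_i(R)$ the unit normal is aligned with the $z_1^{(i)}$-axis, so $\mathbf{b}_+\cdot\mathbf{n}=\pm\,\partial\tilde{\xi}_+/\partial z_2^{(i)}$. Integrating in $z_2^{(i)}$ across the cross-section yields $\tilde{\xi}_+$ evaluated at the two endpoints, and by construction $\tilde{\xi}_+$ equals $1$ on the side of $\gamma_+$ facing the ``upper'' boundary and $0$ on the opposite side (or on the axis of symmetry, in the case of the self-symmetric outlet $D_1\cap\Omega^+$, where $\tilde{\xi}_+=0$). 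The $\pm 1$ values in \eqref{b1} are then determined by the orientation of the normal relative to the side on which $\tilde{\xi}_+=1$; the convention that $\gamma_+$ is traversed in the direction of increasing $z_1^{(1)}$ fixes these signs so that the two fluxes have opposite sign, as required.

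The Leray--Hopf inequality \eqref{b2} is proved exactly as Lemma \ref{lfb}. From the analogue of \eqref{bdm2} one has $|\mathbf{b}_+(x)|\leq c\,\varepsilon/\Delta_{\partial\Omega^+}(x)$. Applying Hölder on $\Omega_{k+1}^+$,
\[
\Bigl|\int_{\Omega_{k+1}^+}(\mathbf{w}\!\cdot\!\nabla)\mathbf{w}\cdot\mathbf{b}_+\,dx\Bigr|\leq\Bigl(\int_{\Omega_{k+1}^+}|\nabla\mathbf{w}|^2\,dx\Bigr)^{\!1/2}\Bigl(\int_{\Omega_{k+1}^+}\frac{c^2\varepsilon^2|\mathbf{w}|^2}{\Delta_{\partial\Omega^+}^2}\,dx\Bigr)^{\!1/2},
\]
and the second factor is bounded by $c\,\varepsilon(\int_{\Omega_{k+1}^+}|\nabla\mathbf{w}|^2\,dx)^{1/2}$ by Hardy's inequality (Lemma \ref{hardy}), using $\mathbf{w}|_{\partial\Omega^+}=0$. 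The localized inequality is obtained by restricting the integrations to $\Omega_{k+1}^+\setminus\Omega_k^+$, since $\mathbf{b}_+$ has no special vanishing structure, and the Hardy/Hölder step applies verbatim on any subdomain.

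Finally, the pointwise bounds \eqref{b3} require relating $\Delta_{\partial\Omega^+}$ to $g_i(z_1^{(i)})$ on the support of $\mathbf{b}_+$ inside each outlet. I would argue, precisely as in the derivation of \eqref{bdm3}, that on $\mathrm{supp}\,\mathbf{b}_+\cap D_i$ one has $d_{\gamma_+}$ and $d_{\partial\Omega^+}$ both comparable to $g_i(z_1^{(i)})$ (since the segments $\gamma_1$, $\gamma_3$ of $\gamma_+$ sit at a fixed non-degenerate relative height in the outlets), and then the bounds on $\nabla\xi_+$, $\nabla^2\xi_+$ from (the analogue of) \eqref{xil} produce the claimed $1/g_i^{p}$ decay upon differentiating $\tilde{\xi}_+$ once more. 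I expect the main obstacle to be precisely this last step: controlling the orientation-dependent sign issues in the flux computation and verifying that the geometric comparison between $d_{\gamma_+}$, $d_{\partial\Omega^+}$ and $g_i$ is uniform along both the self-symmetric outlet $D_1$ and the symmetric outlet $D_3$ (which have a priori different widths), so that the single constant $C(\varepsilon)$ in \eqref{b3} suffices.
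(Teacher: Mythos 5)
The paper offers no proof of Lemma \ref{bl}; it is asserted by analogy with Subsection 4.1.2, and your proposal reconstructs exactly that analogy. Your treatment of solenoidality, of the vanishing of ${\bf b}_+$ near $\partial\Omega^+$ (where $\xi_+\equiv 1$ above $\gamma_+$ and $\tilde\xi_+\equiv 0$ below), of the flux identities \eqref{b1} (integrating $\partial\tilde\xi_+/\partial z_2^{(i)}$ across a cross-section and fixing the opposite signs by orientation, or equivalently by the divergence theorem between two cross-sections), and of the Leray--Hopf inequality via H\"older, the bound $|{\bf b}_+|\le c\,\varepsilon/\Delta_{\partial\Omega^+}$ and Hardy's inequality (Lemma \ref{hardy}) is precisely the argument of Lemma \ref{lfb} and is correct; you also silently and correctly replace $|{\bf w}|^2$ by $|\nabla{\bf w}|^2$ on the right-hand side of \eqref{b2}, which is evidently a misprint in the statement.

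The genuine problem is \eqref{b3}, and you were right to single this step out as the obstacle --- but the difficulty is not the uniformity of the geometric comparison you worry about; it is that the stated exponents cannot be reached by your method, nor by any other. The analogue of \eqref{xil} gives $|\nabla\tilde\xi_+|\le c\,\varepsilon/\Delta_{\partial\Omega^+}$ and $|\nabla^2\tilde\xi_+|\le c\,\varepsilon/\Delta^2_{\partial\Omega^+}$, and on ${\rm supp}\,{\bf b}_+\cap D_i$ the distance $d_{\partial\Omega^+}$ is comparable to $g_i(z_1^{(i)})$ (with an $\varepsilon$-dependent lower constant), exactly as in the derivation of \eqref{bdm3}; this yields $|{\bf b}_+|\le C(\varepsilon)/g_i$ and $|\nabla{\bf b}_+|\le C(\varepsilon)/g_i^2$, one power of $g_i$ weaker than \eqref{b3}. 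Moreover \eqref{b3} as printed is incompatible with the unit-flux normalization \eqref{b1} when an outlet expands: one would have $1=\big|\int_{\sigma_i(R)}{\bf b}_+\cdot{\bf n}\,dS\big|\le 2g_i(R)\sup|{\bf b}_+|\le 2C(\varepsilon)/g_i(R)$, forcing $g_i$ to remain bounded. The exponents in \eqref{b3} are those of a three-dimensional flux carrier (cross-sectional measure $\sim g^2$); in two dimensions they should read $C(\varepsilon)/g_i$ and $C(\varepsilon)/g_i^2$, in line with \eqref{bdm3} and \eqref{outg}, and this corrected form is all that the Dirichlet-integral bound $\int^{R_{j,k}} g_j^{-3}\,dx_1$ of Theorem 3.1 actually requires. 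So you should prove --- and your sketch does prove --- the corrected estimates; the inequality as stated in the lemma is unprovable.
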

Let us define
\begin{equation*}\begin{array}{l}
{\bf B}^{(flux)}_+=\dfrac{\mathbb{F}_2+\mathbb{F}^{(inn)}+\mathbb{F}^{(out)}}{2}\,{\bf b}_+.
\end{array}\end{equation*}
\\Then from the previous lemma it follows that
\begin{equation*}\begin{array}{l}
\int\limits_{\sigma_2(R)} {\bf B}^{(flux)}_+\cdot {\bf n}\,dS=\dfrac{\mathbb{F}_2+\mathbb{F}^{(inn)}+\mathbb{F}^{(out)}}{2}, \ \ \ \ \int\limits_{\sigma_1(R)\cap\Omega^+} {\bf B}^{(flux)}_+\cdot {\bf n}\,dS=\dfrac{\mathbb{F}_1}{2}, \ \ \ R>R_0.
\end{array}\end{equation*}
Let us extend the vector field ${\bf B}^{(flux)}_+$ into the domain $\Omega^-$ and define
\begin{equation*}\begin{array}{l}
{\bf B}^{(flux)}(x)=\begin{cases}
\big(B_{+,1}^{(flux)}(x_1,x_2), \, B_{+,2}^{(flux)}(x_1,x_2)\big), \ \ \ x\in\Omega^+,\\
\\
\big(B_{+,1}^{(flux)}(x_1,-x_2), \, B_{+,2}^{(flux)}(x_1,-x_2)\big), \ \ \ x\in\Omega^-.
\end{cases}
\end{array}\end{equation*}
The vector field ${\bf B}^{(flux)}$ is symmetric, solenoidal, satisfies the Leray-Hopf inequality and the following flux conditions
\begin{equation}\label{b4}\begin{array}{l}
\int\limits_{\sigma_1(R)}{\bf B}^{(flux)}\cdot {\bf n}\,dS=\mathbb{F}_1,
\\
\\
\int\limits_{\sigma_2(R)}{\bf B}^{(flux)}\cdot {\bf n}\,dS+\int\limits_{\sigma_3(R)}{\bf B}^{(flux)}\cdot {\bf n}\,dS=\mathbb{F}_2+\mathbb{F}^{(inn)}+\mathbb{F}^{(out)}.
\end{array}\end{equation}
Therefore, the vector field
\begin{equation*}\begin{array}{l}
{\bf A}={\bf B}^{(inn)}+{\bf B}^{(out)}+{\bf B}^{(flux)}
\end{array}\end{equation*}
gives the desired extension of the boundary value ${\bf a}$
which has all the necessary properties that insure the validity of the Theorem\,3.1.

\subsection{$\mathbb{I}$ type outlet}

In this section we study the domain $\Omega=\Omega
_0\cup D_1\cup D_2,$ where $D_1$ and $D_2$
are self-symmetric outlets (see Fig.\,4). The outer boundary consists of two disjoint connected unbounded components $\Gamma_0^1$ and $\Gamma_0^2.$ Since $\Lambda^1\subset\Gamma_0^1$ and $\Lambda^2\subset\Gamma_0^2$ are symmetric, the fluxes over the outer boundaries are
$$\begin{array}{l}
\int\limits_{\Lambda^1}{\bf a}\cdot {\bf n}\,dS=\dfrac{1}{2}\mathbb{F}_1^{(out)}, \ \ \ 
\int\limits_{\Lambda^2}{\bf a}\cdot {\bf n}\,dS=\dfrac{1}{2}\mathbb{F}_1^{(out)} 
\end{array}$$
and the fluxes over the inner boundaries are as usual
$$\begin{array}{l} 
\int\limits_{\Gamma_i}{\bf a}\cdot {\bf n}\,dS=\mathbb{F}_i^{(inn)}, \ \ \ i=1,...,I.
\end{array}$$
Moreover, we have to prescribe the fluxes over the cross sections of the outlets:
$$\begin{array}{l}\int\limits_{\sigma_1(R)} {\bf u}\cdot {\bf n}\,dS=\mathbb{F}_1, \ \ 
\int\limits_{\sigma_2(R)} {\bf u}\cdot {\bf n}\,dS=\mathbb{F}_2, \ \ R>R_0>0.\end{array}$$
\\Then we have that the necessary compatibility condition \eqref{nc1} can be rewritten as
$$ \mathbb{F}^{(inn)}+\mathbb{F}^{(out)}+\mathbb{F}_1+\mathbb{F}_2=0,$$
where $$\mathbb{F}^{(inn)}=\sum\limits_{i=1}^I \mathbb{F}_i^{(inn)}, \ \ \mathbb{F}^{(out)}=\sum\limits_{m=1}^2 \mathbb{F}_m^{(out)}.$$

\subsubsection{Construction of the Extension}

Notice that the outer boundary does not intersect the $x_1$ axis, i.e., the previous method how we removed the fluxes from the inner boundaries to one of the outer boundaries does not work any more. Therefore, we have slightly modify the construction of the extension ${\bf A},$ i.e., instead of the vector field ${\bf B}^{(inn)}$ we construct ${\bf B}_0+{\bf B}_{\infty}$ (using the same technique as in \cite{KW}), where ${\bf B}_0$ extends the boundary value ${\bf a}$ from $\Gamma_1,..., \Gamma_{I-1},$ and ${\bf B}_{\infty}$ extends ${\bf a}$ from $\Gamma_I.$ Thus, ${\bf A}$ is constructed as the sum:
\begin{equation}\label{EI}\begin{array}{l}
{\bf A}={\bf B}_0+{\bf B}_{\infty}+{\bf B}^{(out)}_1+{\bf B}^{(flux)},
\end{array}\end{equation}
where the vector fields ${\bf B}_1^{(out)}$ and ${\bf B}^{(flux)}$ are constructed in the same way as in the previous section\,4.2.
\begin{figure}[ht!]
\centering
\includegraphics[scale=0.35]{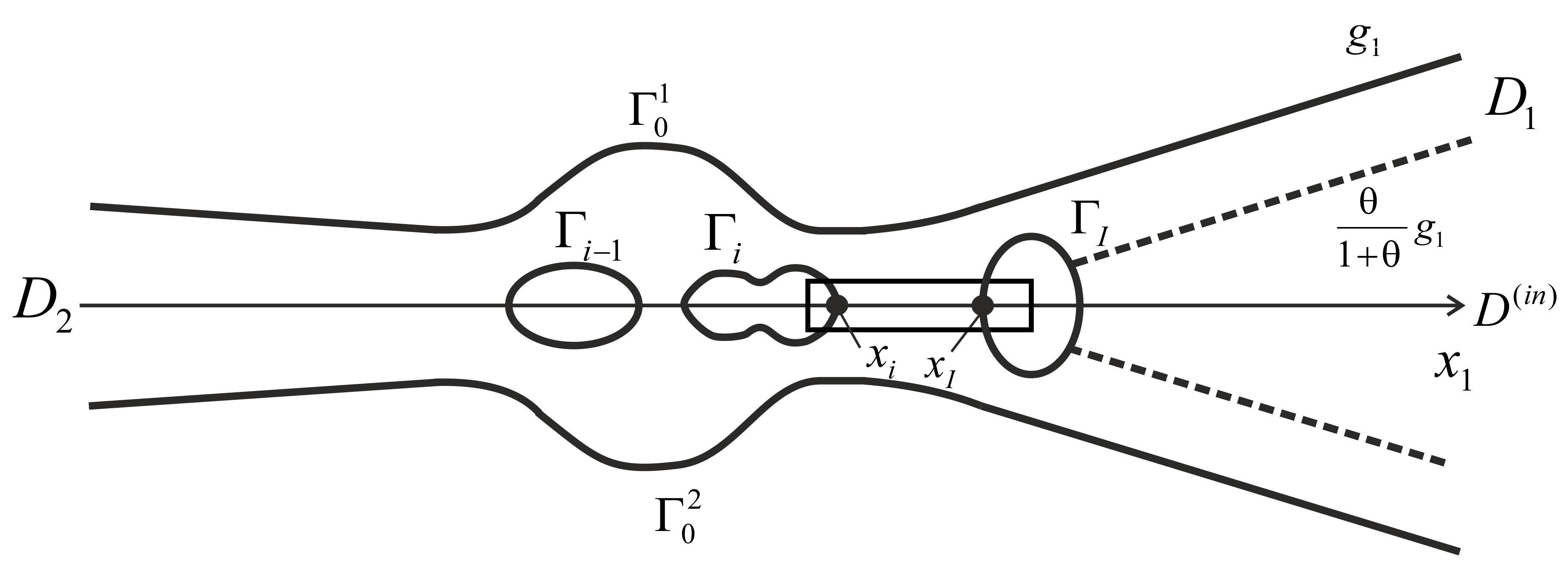}
\caption{Domain $\Omega$}
\end{figure}

In order to construct ${\bf B}_0$ we have to remove the fluxes $\mathbb{F}_i^{(inn)}$, $i=1,\cdots I-1,$ from $\Gamma_1,..., \Gamma_{I-1}$ to the last inner boundary $\Gamma_I$ (with the help of auxiliary function ${\bf b}_i$) and then extend the modified boundary value ${\bf a}-\sum\limits_{i=1}^{I-1}{\bf b}_i$ which has zero fluxes on $\Gamma_i, \ i=1,...,I-1,$ into $\Omega.$ 
Similarly to $\mathbf{b}^{(inn)}$ (constructed in subsection 4.1.1) we define ${\bf b}_i$ on each strip $\Upsilon_i \cap \Omega$, $i=1, \dots, I-1,$ joining $\Gamma_i$ and $\Gamma_I$ (see Fig.\,4) as follows:
\begin{equation}
{\bf b}_i(x)=\begin{cases}
-\dfrac{\mathbb{F}^{(inn)}_i}{2}\big(\beta(x_2),0)\big), \quad {\rm in} \quad \Upsilon_i\cap \Omega, \; x_2>0,\\
\\
-\dfrac{\mathbb{F}^{(inn)}_i}{2}\big(\beta(-x_2),0)\big), \quad {\rm in} \quad \Upsilon_i\cap \Omega, \; x_2<0,\\
\\
(0,0), \ \ \overline{\Omega}\setminus(\Upsilon_i\cap \Omega),
\end{cases}
\end{equation}
where $\beta(x_2)=-\dfrac{\partial \xi_\delta(x_2)}{\partial x_2}$ and $\xi_\delta(x)=\Psi \big(\varepsilon \ln \dfrac{\rho(\Delta_\delta)}{\Delta_\gamma}\big)$ with $\Psi$ and $\rho$ defined by \eqref{fi} and \eqref{ro}, respectively, $\Delta_\delta$ denotes the regularized distance function to the line $x_2=\delta$ and $\gamma$ denotes $x_2=0$.
\\Notice that 
$$\begin{array}{l}
\int\limits_{\Gamma_i}{\bf b}_i\cdot {\bf n}\,dS =\mathbb{F}_i^{(inn)}, \ \ \forall i=1,\dots, I-1.
\end{array}$$ 
Set
$${\bf b}= \sum\limits_{i=1}^{I-1}{\bf b}_i.$$
Thus  ${\bf b}$ is a symmetric solenoidal vector field. Moreover for every $i=1,...,I-1$ one has (note that the flux of ${\bf b}_i$ vanishes on $\Gamma_j$ for every $i\neq j$)

\begin{equation}\label{zerooo}\begin{array}{l}
\int\limits_{\Gamma_i}({\bf a}-{\bf b})\cdot {\bf n}\,dS=\int\limits_{\Gamma_i}({\bf a}-{\bf b}_i)\cdot {\bf n}\,dS=\mathbb{F}^{(inn)}_i-\mathbb{F}^{(inn)}_i=0.
\end{array}\end{equation}
Because of \eqref{zerooo}, 
there exists a solenoidal extension ${\bf A}_0$ of $\big({\bf a}-{\bf b}\big)\Big|_{\cup_{i=1}^{I-1}\Gamma_i}$ satisfiying the Leray--Hopf inequality. Notice that ${\bf A}_0$ can be symmetrized to $\tilde{\bf A}_0$ as in \eqref{symm}.
\\Define then
$$\mathbf{B}_0=\tilde{\bf A}_0+{\bf b}.$$
${\bf B}_0$ is a symmetric extension of the boundary value ${\bf a}$ from $\cup_{i=1}^{I-1}\Gamma_i$. Moreover, 
note that on the last inner boundary $\Gamma_I$ we have
\begin{equation*}\begin{array}{l}
\int\limits_{\Gamma_I}({\bf a}-{\bf b})\cdot {\bf n}\,dS= \sum\limits_{i=1}^{I} \mathbb{F}_i^{(inn)}=\mathbb{F}^{(inn)}.
\end{array}\end{equation*}
In \cite{KW} we have proved the following lemma.
\begin{lem}\label{binnl} Assume that the boundary value ${\bf a}$ is a symmetric function in $W^{1/2,2}(\partial\Omega)$ having a compact support. Denote by $\tilde{{\bf a}}$ the restriction of ${\bf a}$ to $\cup_{i=1}^{I-1}\Gamma_i.$ Then for every $\varepsilon > 0$ there exists a symmetric solenoidal extension ${\bf B}_0$ in $\Omega$ satisfying ${\bf B}_0\Big|_{\cup_{i=1}^{I-1}\Gamma_i}=\tilde{{\bf a}},$ ${\bf B}_0\Big|_{\partial\Omega\setminus\cup_{i=1}^{I-1}\Gamma_i}=0$
and the Leray-Hopf inequality, i.e., for every symmetric solenoidal function ${\bf w} \in W^{1,2}_{loc}(\Omega)$ with ${\bf w}|_{\partial\Omega}=0$ the following estimates
\begin{equation}\label{binn}\begin{array}{l}
\big|\int\limits_{\Omega_{k+1}}({\bf w} \cdot \nabla){\bf w} \cdot {\bf B}_0\, dx\big| \leq c\,\varepsilon\, \int\limits_{\Omega_{k+1}}|\nabla {\bf w}|^2\,dx
\end{array}\end{equation}
hold.
\end{lem}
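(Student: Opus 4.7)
The plan is to verify that $\mathbf{B}_0 = \tilde{\mathbf{A}}_0 + \mathbf{b}$, as explicitly assembled just above the lemma, satisfies every claim. The solenoidality of each $\mathbf{b}_i$ is immediate (it depends on $x_2$ alone and has vanishing second component), while $\tilde{\mathbf{A}}_0$ is solenoidal by Lemma~\ref{extension} together with the symmetrization~\eqref{symm}. For the boundary trace, on $\Gamma_i$ with $i<I$ one has $\mathbf{B}_0|_{\Gamma_i} = \mathbf{b}_i|_{\Gamma_i} + (\mathbf{a}-\mathbf{b})|_{\Gamma_i} = \mathbf{a}|_{\Gamma_i}$, since the other $\mathbf{b}_j$, $j\neq i$, vanish on $\Gamma_i$ by the disjointness of their supports. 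On $\Gamma_I$, on the outer boundary, and on the axial segments outside the strips, both summands vanish because $\mathbf{b}_i$ is compactly supported in $\Upsilon_i\cap\Omega$ and $\tilde{\mathbf{A}}_0$ is supported in a small neighborhood of $\cup_{i=1}^{I-1}\Gamma_i$. Symmetry of $\mathbf{B}_0$ follows from the explicit even/odd rules used to define $\mathbf{b}_i$ and from \eqref{symm}.

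Next I would prove \eqref{binn} by splitting $\mathbf{B}_0$ into $\tilde{\mathbf{A}}_0$ and $\mathbf{b}$ and estimating each piece separately. For $\tilde{\mathbf{A}}_0$ the inequality is the standard consequence of Lemma~\ref{extension}: the $\varepsilon$-scaled Hopf cut-off $\chi$ from \eqref{extension1} combined with the Hardy inequality \eqref{hardy1} (applied where $\mathbf{w}=0$ on $\cup_{i=1}^{I-1}\Gamma_i$) yields the estimate, and the symmetrization~\eqref{symm}, being a bounded linear operation on the components, preserves it.

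The main obstacle is the inequality for $\mathbf{b}=\sum_{i=1}^{I-1}\mathbf{b}_i$, which I would tackle by the method of Lemma~\ref{B_inn}. Using the identity \eqref{rot}, the solenoidality of $\mathbf{w}$, and $\mathbf{w}|_{\partial\Omega}=0$, the $\nabla(\tfrac{1}{2}|\mathbf{w}|^2)$ part integrates out against $\mathbf{b}_i$ and one is left with
\[
\Big|\int_{\Upsilon_i\cap\Omega}(\mathbf{w}\cdot\nabla)\mathbf{w}\cdot\mathbf{b}_i\,dx\Big|
\;\leq\; \frac{|\mathbb{F}_i^{(inn)}|}{2}\,\sup_{x_2}\bigl(|x_2|\beta(|x_2|)\bigr)\int_{\Upsilon_i\cap\Omega}\Big|\frac{\partial w_2}{\partial x_1}-\frac{\partial w_1}{\partial x_2}\Big|\,\frac{|w_2|}{|x_2|}\,dx.
\]
The weight $|x_2|\beta(|x_2|)$ is bounded by $c\varepsilon$ thanks to the $\varepsilon$-weighted Hopf cut-off structure of $\xi_\delta$ (its gradient satisfies $|\nabla\xi_\delta|\leq c\varepsilon/|x_2|$, in analogy with \eqref{xil}). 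Hölder's inequality then decouples the remaining integral; the curl factor is dominated by $2\int|\nabla\mathbf{w}|^2\,dx$, while the Hardy inequality \eqref{hardy1} applied separately in the upper and lower halves of $\Upsilon_i\cap\Omega$ controls $\int|w_2|^2/|x_2|^2\,dx$ by the Dirichlet norm. The symmetry of $\mathbf{w}$ enters here in an essential way: it forces $w_2=0$ in trace on the segment of the $x_1$-axis forming part of the boundary of $\Upsilon_i\cap\Omega^{\pm}$, which is precisely what is needed to apply Hardy with weight $1/|x_2|^2$. Summing over $i=1,\dots,I-1$ and absorbing $\sum_i|\mathbb{F}_i^{(inn)}|$ into the constant yields \eqref{binn}.
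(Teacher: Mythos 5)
The analytic core of your argument is sound and is exactly the mechanism of the paper's proof of Lemma~\ref{B_inn}: decompose ${\bf B}_0=\tilde{\bf A}_0+{\bf b}$, handle $\tilde{\bf A}_0$ by Lemma~\ref{extension} plus Hardy, and for each ${\bf b}_i$ use the identity \eqref{rot}, integrate out the gradient part against the solenoidal ${\bf b}_i$, and close with H\"older, the Hardy inequality \eqref{hardy1} (legitimately applicable to $w_2/|x_2|$ because oddness of $w_2$ gives $w_2=0$ on the axis), and the bound $\sup_{x_2}|x_2|\beta(x_2)\leq c\varepsilon$ coming from the Hopf cut-off structure of $\xi_\delta$. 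Note that the paper itself does not prove this lemma --- it cites \cite{KW} --- so you are reconstructing, correctly, the estimate along the lines of subsection 4.1.1; the only difference worth noting is that here the smallness is produced by the parameter $\varepsilon$ inside $\xi_\delta$ rather than by the parameter $\kappa$ in the function $s$.

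The genuine gap is in your verification of the boundary trace, specifically on $\Gamma_I$. You claim ${\bf b}_i$ vanishes there because it is ``compactly supported in $\Upsilon_i\cap\Omega$''; but the strip $\Upsilon_i$ \emph{terminates on} $\Gamma_I$ precisely so that ${\bf b}_i$ can deposit the flux $\mathbb{F}_i^{(inn)}$ onto $\Gamma_I$, so its trace on $\Gamma_I\cap\Upsilon_i$ is $-\tfrac{\mathbb{F}_i^{(inn)}}{2}(\beta(\pm x_2),0)\neq 0$. Consequently ${\bf B}_0=\tilde{\bf A}_0+{\bf b}$ equals ${\bf b}\neq 0$ on $\Gamma_I$, which is consistent with the paper's own statement immediately after the lemma that $\int_{\Gamma_I}{\bf B}_0\cdot{\bf n}\,dS=\sum_{i=1}^{I}\mathbb{F}_i^{(inn)}$ and with the fact that ${\bf B}_\infty$ is introduced exactly to compensate this nonzero trace --- but it contradicts both the lemma's claim ${\bf B}_0|_{\partial\Omega\setminus\cup_{i=1}^{I-1}\Gamma_i}=0$ and your assertion. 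You should flag this discrepancy rather than assert vanishing; as written, your proof of that boundary condition would fail. A secondary, more minor point: the ``disjointness of supports'' argument for ${\bf b}_j|_{\Gamma_i}=0$, $j\neq i$, is not available in general, since a strip joining $\Gamma_j$ to $\Gamma_I$ may cross intermediate holes (compare Remark~\ref{r1}, where only the \emph{flux}, not the trace, of ${\bf b}_i$ is said to vanish on the other components). This does not damage the conclusion, because $\tilde{\bf A}_0|_{\Gamma_i}=({\bf a}-{\bf b})|_{\Gamma_i}$ with the \emph{full} sum ${\bf b}$, so ${\bf B}_0|_{\Gamma_i}={\bf a}|_{\Gamma_i}$ by exact cancellation --- but the justification should be stated that way.
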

After constructing ${\bf B}_0$ we have the flux $\sum\limits_{i=1}^I\mathbb{F}_i^{(inn)}=\mathbb{F}^{(inn)}$ on $\Gamma_I.$ 
Note that ${\bf B}_0$ extends the boundary value ${\bf a}$ only from $\Gamma_i,$ $i=1,\cdots, I-1,$ and is equal to ${\bf a}+{\bf b}$ on $\Gamma_I.$ Moreover,  
$\int\limits_{\Gamma_I}{\bf B}_0\cdot \nn\,dS=\sum\limits_{i=1}^I\mathbb{F}_i^{(inn)}.$
In order to construct $\bf{B}_\infty,$ we embed a smaller outlet $D^{(in)}$ into $D_1$ in such a way that the curve $x_2=\dfrac{\theta}{\theta +1} g(x_1)$ crosses the last boundary $\Gamma_I$ for a chosen positive number $\theta$ (see Fig.\,4).
\\Then set
$$\bf{b}_\infty=
\begin{cases}
\dfrac{\mathbb{F}^{(inn)}}{2}\Big( \dfrac{\partial \xi(x_1,x_2)}{\partial x_2}, -\dfrac{\partial\xi(x_1,x_2)}{\partial x_1}\Big), \quad x_2>0,\\
\\
\dfrac{\mathbb{F}^{(inn)}}{2}\Big( \dfrac{\partial \xi(x_1,-x_2)}{\partial x_2}, -\dfrac{\partial\xi(x_1,-x_2)}{\partial x_1}\Big), \quad x_2<0,
\end{cases}
$$
where  $\xi(x)=\Psi\big(\varepsilon\ln \dfrac{\rho(\Delta_{\frac{\theta}{1+\theta}g})}{\Delta_\gamma} \big)$ for $x\in D^{(in)}$ and $\xi$ is extended by $0$ into $D_1$.
\\Since for any cross section $\sigma_1(R)$ it holds that $\int\limits_{\Gamma_I}{\bf b}_\infty \cdot {\bf n}\,dS=-\int\limits_{\sigma_1(R)}{\bf b}_\infty \cdot {\bf n}\,dS= \sum\limits_{i=1}^I\mathbb{F}_i^{(inn)},$ one has
$\int\limits_{\Gamma_I}({\bf a}-{\bf b}-{\bf b}_\infty)\cdot\,{\bf n}\,dS=0.$ Because of the last condition there exists a solenoidal
extension $\bf{A}_\infty$ of $({\bf a}-{\bf b}-{\bf b}_\infty)|_{\Gamma_I}$ satisfying the Leray-Hopf inequality. Notice that ${\bf A}_{\infty}$ can be symmetrized to $\tilde{\bf A}_\infty$ as in \eqref{symm}. Then $\bf{B}_\infty = {\bf b}+\bf{b}_\infty+\tilde{\bf A}_\infty.$ Set  ${\bf B}_0+{\bf B}_\infty.$ 
The proof of the following lemma can be found in \cite{KW}. 
\begin{lem}\label{binfty}  Assume that the boundary value ${\bf a}$ is a symmetric function in $W^{1/2,2}(\partial\Omega)$ having a compact support. For every $\varepsilon > 0$ there exists a symmetric solenoidal extension ${\bf B}_\infty$ of  ${\bf a}|_{\Gamma_I}$ in $\Omega$ satisfying the Leray-Hopf inequality, i.e., for every symmetric solenoidal function ${\bf w} \in W^{1,2}_{loc}(\Omega)$ with ${\bf w}|_{\partial\Omega}=0$ the following estimates
\begin{equation}\label{binn}\begin{array}{l}
\big|\int\limits_{\Omega_{k+1}}({\bf w} \cdot \nabla){\bf w} \cdot ({\bf B}_0+{\bf B}_\infty)\, dx\big| \leq c\,\varepsilon\, \int\limits_{\Omega_{k+1}}|\nabla {\bf w}|^2\,dx\\
\\
\big|\int\limits_{\Omega_{k+1}\setminus\Omega_k}({\bf w} \cdot \nabla){\bf w} \cdot ({\bf B}_0+{\bf B}_\infty)\, dx\big| \leq c\,\varepsilon\, \int\limits_{\Omega_{k+1}\setminus\Omega_k}|\nabla {\bf w}|^2\,dx
\end{array}\end{equation}
hold.
\end{lem}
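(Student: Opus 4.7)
The plan is to verify the Leray-Hopf inequality \eqref{binn} by splitting $\mathbf{B}_0+\mathbf{B}_\infty = \tilde{\mathbf{A}}_0 + \mathbf{b} + \mathbf{b}_\infty + \tilde{\mathbf{A}}_\infty$ and estimating each piece separately. Since Lemma \ref{binnl} already establishes the bound for $\mathbf{B}_0 = \tilde{\mathbf{A}}_0 + \mathbf{b}$, the remaining task is to dominate the integrals involving $\mathbf{b}_\infty$ and the Hopf-type remainder $\tilde{\mathbf{A}}_\infty$ by $c\,\varepsilon \int|\nabla\mathbf{w}|^2\,dx$.

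For $\tilde{\mathbf{A}}_\infty$, I would invoke Lemma \ref{extension}: its support lies in a thin neighborhood of $\Gamma_I$ and it satisfies the pointwise bound $|\tilde{\mathbf{A}}_\infty(x)| \leq c\,\varepsilon\,\|\mathbf{a}\|_{W^{1/2,2}(\partial\Omega)}/\mathrm{dist}(x,\Gamma_I)$ coming from the $\chi$-cutoff estimate. The Cauchy--Schwarz inequality followed by Hardy's inequality (Lemma \ref{hardy}), applied with $\mathcal{L}=\Gamma_I$ where $\mathbf{w}$ vanishes, gives the Leray-Hopf bound for this piece on both $\Omega_{k+1}$ and $\Omega_{k+1}\setminus\Omega_k$.

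The principal step is the estimate on $\mathbf{b}_\infty=\tfrac{\mathbb{F}^{(inn)}}{2}(\partial_2\xi,-\partial_1\xi)$, in direct analogy with the treatment of $\mathbf{b}_+^{(out)}$ in Section 4.1.2. First, by an argument parallel to Lemma \ref{xl}, one obtains the estimate $|\nabla\xi| \leq c\,\varepsilon/\Delta_\gamma$ in the transition zone $e^{-1/\varepsilon}\rho(\Delta_{\frac{\theta}{1+\theta}g}) \leq \Delta_\gamma \leq \rho(\Delta_{\frac{\theta}{1+\theta}g})$, giving $|\mathbf{b}_\infty(x)| \leq c\,\varepsilon |\mathbb{F}^{(inn)}|/|x_2|$ inside $D^{(in)}$, together with the analogue of \eqref{bdm3} providing decay $|\mathbf{b}_\infty| \leq c(\varepsilon)|\mathbb{F}^{(inn)}|/g(z_1^{(1)})$ deep in the outlet. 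Then I would apply the rotation identity $(\mathbf{w}\cdot\nabla)\mathbf{w} = \nabla(\tfrac{1}{2}|\mathbf{w}|^2) + (\mathrm{curl}\,\mathbf{w})(-w_2,w_1)$ to split the target integrand into a pure-gradient part and a rotational part $-\tfrac{\mathbb{F}^{(inn)}}{2}(\mathrm{curl}\,\mathbf{w})(\mathbf{w}\cdot\nabla\xi)$. The gradient part integrates by parts to zero in the bulk (since $\mathrm{div}\,\mathbf{b}_\infty=0$ and $\mathbf{w}|_{\partial\Omega}=0$), leaving only contributions on the outlet cross-sections; the rotational part is controlled by Hölder together with Hardy's inequality, exploiting that symmetry of $\mathbf{w}$ forces $w_2$ to vanish on the $x_1$-axis, so that $\int w_2^2/x_2^2\,dx \leq c\int|\nabla\mathbf{w}|^2\,dx$.

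I expect the main obstacle to be the treatment of the $w_1\,\partial_1\xi$ part of the rotational term and the boundary traces on $\sigma_j(R_{j,k+1})$ produced by the integration by parts of $\nabla(\tfrac{1}{2}|\mathbf{w}|^2)\cdot\mathbf{b}_\infty$: Hardy's inequality does not directly control $w_1/|x_2|$ because $w_1$ has no reason to vanish on the $x_1$-axis, and $\mathbf{w}$ itself does not vanish on the cross-sections, across which $\mathbf{b}_\infty$ carries the flux $\mathbb{F}^{(inn)}$. The remedy is to shift the $x_1$-derivative off $\xi$ by a further integration by parts, so that $w_1\,\partial_1\xi$ is replaced by $\xi\,\partial_1 w_1$, whose $L^2$ norm is controlled by $\|\nabla\mathbf{w}\|$, and to absorb the resulting cross-section boundary terms via a trace estimate combined with the $1/g(z_1^{(1)})$-decay of $\mathbf{b}_\infty$ in $D_1$. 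Once these estimates are assembled, the localized inequality on $\Omega_{k+1}\setminus\Omega_k$ follows immediately, because every pointwise bound on $\mathbf{b}_\infty$ and every invocation of Hardy's inequality remains valid when the integration domain is replaced by the annular subdomain.
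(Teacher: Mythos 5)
The paper itself contains no proof of this lemma: immediately before the statement it says ``The proof of the following lemma can be found in \cite{KW}.'' So your attempt can only be measured against the analogous constructions in Sections 4.1--4.2, and at the level of architecture it matches them: split off ${\bf B}_0$ via Lemma 4.6, treat $\tilde{\bf A}_\infty$ by the Hopf cut-off plus Hardy's inequality relative to $\Gamma_I$, and handle ${\bf b}_\infty$ through the rotation identity so that the singular factor $1/|x_2|$ is paired only with $w_2$, which vanishes on the axis by symmetry. You also correctly located the two genuine difficulties (the $w_1\,\partial_1\xi$ term and the cross-section boundary terms), and you silently repaired the paper's double counting of ${\bf b}$ (as written, ${\bf B}_0+{\bf B}_\infty=\tilde{\bf A}_0+2{\bf b}+{\bf b}_\infty+\tilde{\bf A}_\infty$, which does not restrict to ${\bf a}$ on $\Gamma_I$; your decomposition is the correct one).

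The gaps are in your remedies for those two difficulties. For the term $\int(\partial_1w_2-\partial_2w_1)\,w_1\,\partial_1\xi\,dx$, ``shifting the $x_1$-derivative off $\xi$'' does not produce $\xi\,\partial_1w_1$ alone: the integrand also carries the factor $\partial_1w_2-\partial_2w_1$, so integration by parts in $x_1$ generates second derivatives of ${\bf w}$, which are not available in $W^{1,2}_{loc}$. The step that actually closes this term is a better pointwise bound on $\partial_1\xi$ itself: since $\Delta_\gamma$ may be taken equal to $|x_2|$ (the distance to a straight line needs no regularization), $\partial_1\xi$ involves only $\partial_1\rho(\Delta_{\frac{\theta}{1+\theta}g})/\rho(\Delta_{\frac{\theta}{1+\theta}g})$, and on ${\rm supp}\,\nabla\xi$ one checks $\rho(\Delta_{\frac{\theta}{1+\theta}g})\geq c\,g(x_1)$, whence $|\partial_1\xi|\leq c\,\varepsilon/g(x_1)$; the factor $w_1/g$ is then controlled by Hardy's inequality relative to the \emph{lateral} boundary $|x_2|=g(x_1)$, where $w_1$ does vanish. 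Second, your treatment of the boundary terms $\int_{\sigma_1(R_{1,k+1})}\tfrac12|{\bf w}|^2\,{\bf b}_\infty\cdot{\bf n}\,dS$ produced by the gradient part is not quantitative: combining the decay $|{\bf b}_\infty|\leq C(\varepsilon)/g$ with a trace (Poincar\'e) estimate on the cross-section yields at best
\begin{equation*}
C(\varepsilon)\,g(R_{1,k+1})\int\limits_{\sigma_1(R_{1,k+1})}|\partial_2{\bf w}|^2\,dS,
\end{equation*}
a line integral of $|\nabla{\bf w}|^2$ that is not dominated by the area integral $\int_{\Omega_{k+1}}|\nabla{\bf w}|^2dx$ with a constant independent of $k$ (and the $C(\varepsilon)$ here is not $c\,\varepsilon$). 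This term either must be shown to vanish for the class of ${\bf w}$ actually used (e.g.\ ${\bf w}\in H_S(\Omega_l)$ with $k+1=l$, vanishing on the cross-sections), or be absorbed separately inside the Saint-Venant iteration; as stated, your argument does not establish the inequality on $\Omega_{k+1}$ and $\Omega_{k+1}\setminus\Omega_k$ for arbitrary ${\bf w}$ vanishing only on $\partial\Omega$.
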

The vector fields ${\bf B}^{(out)}_1$ and ${\bf B}^{(flux)}$ are constructed analogically as in the previous subsections 4.1.2 and 4.2.1, respectively. 

Therefore, we have an extension of the form \eqref{EI} which
has all the necessary properties and insures the validity of the Theorem\,3.1.

\begin{remark}
Combining general method ( $\mathbb{V}$ and $\mathbb{Y}$ types domains) and slightly different method ($\mathbb{I}$ type domain) we can construct an extension ${\bf A}$ of the boundary value ${\bf a}$ when the symmetric domain $\Omega$ has finitely many outlets to infinity.
\end{remark}

\section*{Acknowledgement}

The research leading to these results has received funding from Lithuanian-Swiss cooperation programme to reduce economic and social disparities within the enlarged European Union under project agreement No. CH-3-SMM-01/01.

 \end{document}